%June 3,  2008
\documentclass[11pt]{amsart}
\usepackage{amsmath,amssymb,amsthm,amscd}
\numberwithin{equation}{section}

\def\R{{\mathfrak R}}

\newtheorem{prop}{Proposition}[section]
\newtheorem{theo}{Theorem}
\newtheorem{lemm}[prop]{Lemma}
\newtheorem{coro}[prop]{Corollary}
\newtheorem{rema}[prop]{Remark}
\newtheorem{exam}[prop]{Example}
\newtheorem{defi}[prop]{Definition}

\def\begeq{\begin{equation}}
\def\endeq{\end{equation}}
\def\p{\partial}

\def\lf{\left}
\def\ri{\right}
\def\e{\epsilon}

\def\R{\Bbb R}

\def\Si{\Sigma}

\def\diam{{\rm diam}}
\def\Area{{\rm Area}}
\def\div{{\rm div}}

\begin{document}
\title{ On the behavior of quasi-local mass at the infinity along nearly round surfaces }
\author{Yuguang Shi$^1$}
\address{Key Laboratory of Pure and Applied mathematics, School of
 Mathematics Science, Peking University,
Beijing, 100871, P. R. China.}\email{ygshi@math.pku.edu.cn}
%    Information for first author
\author{Guofang Wang}

\address{Faculty of Mathematics, University Magdeburg, D-39016, Magdebrug, Germany}
\email{gwang@math.uni-magdeburg.de}
\thanks{$^1$Research partially supported by Grant of NSFC (10725101) and by   973 Program (2006CB805905)
 of China }

%    Information for second author
\author{Jie Wu$^1$}
%\thanks{$^2$}
\address{Key Laboratory of Pure and Applied mathematics, School of
 Mathematics Science, Peking University,
Beijing, 100871, P. R. China.} \email{wujie@math.pku.edu.cn}
\renewcommand{\subjclassname}{%
  \textup{2000} Mathematics Subject Classification}
\subjclass[2000]{Primary 53C20; Secondary 83C99}
\date{May 2008}
\begin{abstract}
In this paper,  we  study the limiting behavior  of the Brown-York
mass  and Hawking mass along nearly round surfaces at infinity of an
asymptotically flat manifold.  Nearly round surfaces can be defined
in an intrinsic way. Our results show that the ADM mass of an
asymptotically flat $3$-manifold can be approximated by some
geometric invariants of a family of nearly round surfaces which
approach to infinity of the manifold.
\end{abstract}

\maketitle\markboth{ Yuguang Shi, Guofang Wang and Jie Wu}{Infinity
limits of Quasilocal mass}

\section{Introduction}
The ADM mass  of an asymptotically flat (AF) manifold is a basic
conserved quantity in General relativity. To state its explicit
definition, we need the following:

\begin{defi}\label{defaf}
A complete three manifold $(M,g)$ is said to be {\rm asymptotically flat}
(AF) of order $\tau$ (with one end) if there is a compact subset $K$
such that $M\setminus K$ is diffeomorphic to $\R^3\setminus B_R(0)$
for some $R>0$ and in the standard coordinates in $\R^3$, the metric
$g$ satisfies:
\begin{equation} \label{daf1}
g_{ij}=\delta_{ij}+\sigma_{ij}
\end{equation}
with
\begin{equation} \label{daf2}
|\sigma_{ij}|+r|\p \sigma_{ij}|+r^2|\p\p\sigma_{ij}|=O(r^{-\tau}),
\end{equation}
for some constant $1\ge\tau>\frac{1}{2}$, where $r$ and $\p$ denote
the Euclidean distance and standard derivative operator on $\R^3$
respectively.
\end{defi}

A coordinate system of $M$ near infinity so that the metric tensor
in these coordinates satisfies the decay conditions in the
definition is said to be {\it admissible}, in such a coordinate
system, we have

\begin{defi}
The Arnowitt-Deser-Misner (ADM) mass (see \cite{ADM}) of an
asymptotically flat manifold $M$ is defined as:
\begin{equation} \label{defadm1}
m_{ADM}(M)=\lim_{r\to\infty}\frac{1}{16\pi}\int_{S_r}
\lf(g_{ij,i}-g_{ii,j}\ri)\nu^jd\sigma^0,
\end{equation}
where $S_r$ is the Euclidean sphere, $d\sigma^0$ is the volume
element induced by the Euclidean metric, $\nu$ is the outward unit
normal of $S_r$ in $\R^3$ and the derivative is the ordinary
partial derivative.
\end{defi}
We  always assume that the scalar curvature is in $L^1(M)$ so that
the limit exists in the definition. Under the decay conditions in
the definition of AF manifold, the definition of the ADM mass  is
independent of the choice of admissible coordinates  by the result
of Bartnik \cite{BTK86}. Indeed $S_r$ in the above definition does
not need to be the Euclidean sphere in some admissible coordinates,
it could be   a connected boundary of an exhausting domain with its
area growth like $r^2$. Here $r= \min_{x\in \Si} r(x)$, $r(x)$ is
the  distance function to some fixed point(see Proposition 4.1,
\cite{BTK86}). Hence the ADM mass of an AF manifold is actually a
geometric quantity. With these facts in mind and in the view point
of geometry, one may intend to ask: {\it Whether or not one can
define certain geometric invariants on  a family of surfaces defined
in an intrinsic way, i.e. are independent of the choice of
admissible coordinates,  that tends to the ADM mass as the surfaces
approach to the infinity of the manifold ?} In this paper, we will
investigate this problem and give an affirmative answer to it.

Intuitively, the ADM mass is a kind of total mass of $(M, g)$. In many
cases, we want to measure how much mass is contained in a bounded
domain. For this purpose, the notion of quasi-local energy (mass) is
needed. The Brwon-York mass and the Hawking mass are two of them which are
used frequently in literature and both of them are geometric
invariants of the surfaces (see the definitions below). Physically,
one natural property of quasi-local mass need to have is : {\it the
limit of quasi-local mass of the boundary of exhausting domains of
an AF manifold should approach the ADM mass }(see \cite{CY}). Many
people have studied this problem, they verified that for boundary of
certain exhausting domains this property is true for the Brown-York mass
and the Hawking mass, see \cite{BLP, BBWYY,BY2, HKGH1, Ma}, see also
\cite{FST, ST02}. However, the definitions of these boundaries
considered in above mentioned  papers depend on some special coordinates.
So, these surfaces are not intrinsic.

In this paper, we will discuss the problem  mentioned above in the
case that surfaces are {\it nearly round } at infinity of  an AF
manifold $(M, g)$. Let us begin with the following definition

\begin{defi}
Let $\{\Si_r\}$ be a family of surfaces which are topological
sphere in $(M, g)$, $r= \min_{x\in \Si} r(x)$,  then we call
$\Si_r$ as nearly round  when $r$ tends to infinity if it
satisfies:
\begin{enumerate}
    \item $|\overset\circ A| +r |\nabla \overset\circ A| \leq C r^{-1-\tau}$,
    \item $\max_{x\in \Si_r} r(x) \leq C\min_{x\in \Si_r} r(x)+C$,
    \item $\diam (\Si_r) \leq C r$,
    \item $\Area (\Si_r)\leq C r^2$.
\end{enumerate}
Here  $C$  is a constant independent of $r$. $\diam (\Si_r)$,
$\nabla$, and $|\cdot|$, denote  diameter of the surface, covariant
derivatives and the norm with respect to the induced metric of $g$
respectively, $r(x)$ is the distance of $x$ to some fixed point in
$(M,g)$, $A$ is the second fundamental forms of $\Si_r$ in $(M,g)$ and
$\overset\circ A$ is the trace free part of $A$.
\end{defi}

\begin{rema}
\begin{enumerate}
\item It is easy to see that the above definition of nearly round
surface is intrinsic, i.e. it does not depend on any coordinates.
  \item We suspect that the third and the fourth assumptions are
superfluous, since both of them can be derived from the first and
second assumptions in the Euclidean space case.
  \item It is not difficult to see that the third assumption
implies the second one.
\end{enumerate}
\end{rema}

One of very important and also quite natural surfaces in AF
manifolds are those with constant mean curvature and approach to
infinity of the manifolds, the existence of these surfaces was
proved by \cite{Ye}, and \cite{HY} many years ago, and later the
uniqueness was obtained by \cite{QT}(see also \cite{HY}). It is not
so difficult to see that these constant mean curvature surfaces are
nearly round (see the discussion at the beginning of Section 2).
Besides this, all the surfaces considered in \cite{BLP, BBWYY,BY2,
HKGH1, Ma,FST, ST02} are nearly round.

Now, let us move to the definition of the Brown-York mass and the Hawking
mass.

Let $\lf(\Omega, g\ri)$ be a compact three manifold with smooth
boundary $\p \Omega$. Suppose the Gauss curvature of $\p\Omega$ is
positive, then the Brown-York quasi local mass of $\p\Omega$ is
defined as  (see \cite{BY1,BY2}):
\begin{defi}
\begin{equation} \label{defbym1}
m_{BY}\lf(\p \Omega\ri)=\frac{1}{8\pi}\int_{\p \Omega}(H_0-H)d\sigma
\end{equation}
 where  $H$ is the mean curvature of $\p\Omega$ with respect to the
 outward unit normal and
  the metric $g$, $d\sigma$ is the volume element induced on $\p\Omega$ by
 $g$ and $H_0$ is the mean curvature  of $\p \Omega$ when
  embedded in  $\R^3$.
\end{defi}
The Brown-York mass is well-defined because by the result of
Nirenberg \cite{Nr}, $\p\Omega$ can be isometrically embedded in
$\R^3$ and the embedding is unique by
\cite{Herglotz43,Sacksteder62,PAV}. In particular, $H_0$ is
completely determined by the metric on $\p\Omega$. However, this is
a global property.  In contrast, the norm of the mean curvature
vector of an embedding of $\p\Omega$ into the light cone in the
Minkowski space can be expressed explicitly in terms of the Gauss
curvature, see \cite{BLY99}. Hence in the the study of Brown-York
mass,   one of the difficulties  is to estimate
$\int_{\p\Omega}H_0d\sigma$. We will use the Minkowski formulae
\cite{KLBG} and the estimates of Nirenberg \cite{Nr} to deal with
this problem.

The Hawking quasi local mass is  defined as (see \cite{HKG}):
\begin{defi}
\begin{equation} \label{defhkgm1}
m_H(\p \Omega)=\frac{|\p
\Omega|^{1/2}}{(16\pi)^{3/2}}\lf(16\pi-\int_{\p
\Omega}H^2d\sigma\ri)
\end{equation}
 where $d\Sigma$ is the volume element induced on $\p\Omega$ by
 $g$
 and $|\p \Omega|$ is the area of $\p \Omega$.
\end{defi}

Our main results in this paper are:

\begin{theo}\label{BYmass}
Let $(M,g)$ be an AF manifold, $\{\Si_r\}$ be  nearly  round
surfaces of $(M, g)$ when $r$ tends to infinity, then
$$
\lim_{r\rightarrow \infty}m_{BY}(\Si_r)=m_{ADM}(M).
$$
\end{theo}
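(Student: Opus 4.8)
The plan is to evaluate the two integrals $\int_{\Si_r}H\,d\sigma$ and $\int_{\Si_r}H_0\,d\sigma$ separately in an admissible coordinate system and to show that their difference, divided by $8\pi$, converges to the ADM flux. Throughout I work in coordinates with $g_{ij}=\delta_{ij}+\sigma_{ij}$ subject to the decay \eqref{daf2}. First I would exploit the four nearly round conditions to pin down the geometry of $\Si_r$: conditions (2)--(4) together with $|\overset\circ A|\le Cr^{-1-\tau}$ force the rescaled metric $r^{-2}h$ (where $h$ is the induced metric) to be $C^2$-close to the round unit sphere, so the intrinsic Gauss curvature is $r^{-2}(1+o(1))>0$. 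This positivity is precisely what Nirenberg's theorem needs: $(\Si_r,h)$ admits a unique convex isometric embedding $X\colon\Si_r\to\R^3$, and Nirenberg's a priori estimates control it in $C^2$ and show the image is close to a Euclidean sphere of radius comparable with $r$. I would also record Bartnik's reduction (Proposition 4.1 of \cite{BTK86}), so that the ADM mass may be computed as $\lim_r\frac{1}{16\pi}\int_{\Si_r}(\sigma_{ij,i}-\sigma_{ii,j})\nu^j\,d\sigma^0$ over the very surfaces $\Si_r$, with $\nu$ and $d\sigma^0$ Euclidean.

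For $\int_{\Si_r}H\,d\sigma$ I would carry out the standard linearization of the mean curvature operator under $g=\delta+\sigma$. Comparing the $g$-mean curvature $H$ with the Euclidean mean curvature $H^e$ of the same surface produces, to leading order, the ADM integrand $\tfrac12(\sigma_{ij,i}-\sigma_{ii,j})\nu^j$, together with terms that are tangential divergences on $\Si_r$ and terms carrying an extra factor of $\sigma$ or of $r^{-1}$. Integrating over the closed surface $\Si_r$, the tangential divergences drop out by the divergence theorem, and the remaining products are bounded using \eqref{daf2} and the nearly round estimates. For $\int_{\Si_r}H_0\,d\sigma$ I would instead use the Minkowski integral formula $\int_{\Si_r}H_0\,d\sigma=2\int_{\Si_r}K\,\langle X,\nu\rangle\,d\sigma$, where $K$ is the Gauss curvature and $\langle X,\nu\rangle$ the support function of the Nirenberg embedding. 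The key point is that $K$ is intrinsic, so $\int_{\Si_r}K\,d\sigma=4\pi$ by Gauss--Bonnet, while the support function is, by Nirenberg's estimates, a controlled perturbation of the radius of $\Si_r$. Writing $\langle X,\nu\rangle$ as this radius plus a fluctuation and using $\int K\,d\sigma=4\pi$ to annihilate the constant part, $\int_{\Si_r}H_0\,d\sigma$ is reduced to the intrinsic data of $(\Si_r,h)$ and thence, via the Gauss equation $K=\mathcal K_M+\det A$ with $\mathcal K_M=O(r^{-2-\tau})$, can be matched against the Euclidean mean-curvature integral of $\Si_r$ that also enters the $H$-computation.

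The final step is to combine the two expansions and verify that the Euclidean mean-curvature integrals cancel, leaving exactly $\frac{1}{16\pi}\int_{\Si_r}(\sigma_{ij,i}-\sigma_{ii,j})\nu^j\,d\sigma^0$ and hence $m_{ADM}(M)$ in the limit.

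The main obstacle is the following. Both $\int_{\Si_r}H_0\,d\sigma$ and $\int_{\Si_r}H\,d\sigma$ behave like $8\pi r$ plus corrections of apparent size $r^{1-\tau}$, which diverge when $\tau<1$; indeed this same apparent divergence already afflicts the ADM flux and is only tamed by the Stokes-type cancellation underlying Bartnik's convergence theorem. The heart of the matter is therefore to show that these $O(r^{1-\tau})$ pieces cancel between the two integrals rather than estimating each integral crudely: the $H_0$ piece must be controlled through the Minkowski formula together with Nirenberg's stability estimate for the support function, and the $H$ piece through the coordinate linearization, in such a way that after subtraction only the convergent ADM flux remains in $\frac{1}{8\pi}\int_{\Si_r}(H_0-H)\,d\sigma$. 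Sharply controlling the global, embedding-dependent quantity $\langle X,\nu\rangle$ — to an accuracy of $o(r^{-1})$ after integration against the $r^2$ area element — is where I expect the genuine difficulty to lie.
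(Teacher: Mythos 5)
Your outline reproduces the paper's overall architecture (Nirenberg's stability estimate for the isometric embedding, the Minkowski formula plus Gauss--Bonnet for $\int_{\Si_r}H_0\,d\sigma$, the coordinate linearization of $H$ against the Euclidean mean curvature with the tangential divergences removed by Stokes, and Bartnik's Proposition 4.1 to identify the limit), but it has a genuine gap at exactly the point you flag in your last paragraph, and flagging the difficulty is not the same as resolving it. Nirenberg's estimate gives only the pointwise bound $X_r\cdot n_0=r_0+O(r^{1-\tau})$, and this is all that Theorem \ref{estimate H0} of the paper provides as well; it cannot be improved to the accuracy you need. Feeding it into $\int_{\Si_r}H_0\,d\sigma=2\int_{\Si_r}K\,X_r\cdot n_0\,d\sigma$ and using Gauss--Bonnet to handle only the ``constant part'' leaves the fluctuation term $2\int_{\Si_r}K\,(X_r\cdot n_0-r_0)\,d\sigma$, which this pointwise bound controls only by $O(r^{1-\tau})$ --- divergent for every $\tau<1$. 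Writing $K=r_0^{-2}+\bar K$ with $\bar K=O(r^{-2-\tau})$ isolates the real obstruction: the term $\frac{2}{r_0^2}\int_{\Si_r}X_r\cdot n_0\,d\sigma=\frac{6V(r)}{r_0^2}$, where $V(r)$ is the volume enclosed by the embedded image. The enclosed volume is a global quantity that the $C^{2,\alpha}$ embedding estimate pins down only to $O(r^{3-\tau})$, so no amount of sharpening the pointwise support-function estimate closes the loop; your proposal as stated cannot be completed.

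The missing idea in the paper is to pair the \emph{two} Minkowski formulae so that $V(r)$ cancels. The first, $\int_{\Si_r}H_0\,d\sigma=2\int_{\Si_r}K\,X_r\cdot n_0\,d\sigma$, gives (as in \eqref{BY-ADMH01}) $\int_{\Si_r}H_0\,d\sigma=\frac{6V(r)}{r_0^2}+8\pi r_0-\frac{2\Area(\Si_r)}{r_0}+O(r^{1-2\tau})$; the second, $2\Area(\Si_r)=\int_{\Si_r}H_0\,X_r\cdot n_0\,d\sigma$, applied with $H_0=\frac{2}{r_0}+H_1$, $H_1=O(r^{-1-\tau})$, gives \eqref{BY-ADMH03}, namely $\int_{\Si_r}H_0\,d\sigma=-\frac{6V(r)}{r_0^2}+\frac{4\Area(\Si_r)}{r_0}+O(r^{1-2\tau})$. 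The volume enters the two expressions with opposite signs, so averaging eliminates it and yields $\int_{\Si_r}H_0\,d\sigma=4\pi r_0+\frac{\Area(\Si_r)}{r_0}+O(r^{1-2\tau})$, an error that is $o(1)$ precisely because $\tau>\frac12$. The same two-formula argument applied to the Euclidean embedding $\hat X=y-\overrightarrow{a}$ of Proposition \ref{nearround} gives $\int_{\Si_r}\hat H\,d\sigma_0=4\pi r_0+\frac{\Area_0(\Si_r)}{r_0}+O(r^{1-2\tau})$, so in the difference the divergent $4\pi r_0$ terms cancel exactly, and the residual $(\Area(\Si_r)-\Area_0(\Si_r))/r_0$ is an explicit integral of $\sigma$ over $\Si_r$ that recombines with the correction terms from the linearization of $H$ (the paper's \eqref{integral of diff meancurv}), leaving only the ADM flux. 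Without this cancellation mechanism --- or some equivalent identity controlling $3V(r)-r_0\Area(\Si_r)$ to order $o(r^2)$ --- the two expansions you propose to subtract each carry uncontrolled $O(r^{1-\tau})$ pieces and the subtraction proves nothing.
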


\begin{theo}\label{Hawkingmass}
Let $(M,g)$ be an AF manifold, $\{\Si_r\}$ be  nearly  round
surfaces of $(M, g)$ when $r$ tends to infinity then
$$
\lim_{r\rightarrow \infty}m_{H}(\Si_r)=m_{ADM}(M).
$$
\end{theo}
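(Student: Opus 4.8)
The plan is to prove Theorem~\ref{Hawkingmass} by reducing the limit of the Hawking mass to the limit of the Brown–York mass, which is already established in Theorem~\ref{BYmass}. The key observation is that both quasi-local masses are built out of integrals over $\Si_r$ involving the mean curvature $H$, and for nearly round surfaces the intrinsic geometry is asymptotically that of a round Euclidean sphere. First I would record the asymptotics that the nearly round conditions force: since $|\overset\circ A|\le Cr^{-1-\tau}$ and $\Area(\Si_r)\le Cr^2$, the surface looks, to leading order, like a geodesic sphere of Euclidean radius $\rho_r$ with $\Area(\Si_r)=4\pi\rho_r^2(1+o(1))$, so $|\Si_r|^{1/2}=\sqrt{4\pi}\,\rho_r(1+o(1))$ and $\rho_r = r(1+o(1))$ by conditions (2) and (3). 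The mean curvature satisfies $H=\frac{2}{\rho_r}+O(r^{-1-\tau})$ in the AF regime, the error coming from $\sigma_{ij}=O(r^{-\tau})$ together with the trace-free control on $A$.

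The main algebraic step is to expand $16\pi-\int_{\Si_r}H^2\,d\sigma$ and compare it with the Brown–York integrand. I would write $H^2 = (H_0 - (H_0-H))^2 = H_0^2 - 2H_0(H_0-H)+(H_0-H)^2$, where $H_0$ is the mean curvature of the isometric embedding of $\Si_r$ into $\R^3$ guaranteed by Nirenberg's theorem. By the Gauss–Bonnet theorem applied to the embedded surface, $\int_{\Si_r}(H_0^2/4 - K)\,d\sigma_0$ measures the deviation from a round sphere, and since the induced metric of a nearly round $\Si_r$ is $C^2$-close to round, $\int_{\Si_r}H_0^2\,d\sigma = 16\pi + o(1)$ while $\int_{\Si_r}(H_0-H)^2\,d\sigma \to 0$ because $H_0-H=O(r^{-1-\tau})$ pointwise and the area grows like $r^2$ with $2\tau>1$. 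Substituting and multiplying by $|\Si_r|^{1/2}/(16\pi)^{3/2}$, the $H_0^2-16\pi$ piece and the quadratic remainder both contribute $o(1)$ after normalization, so
\begin{equation*}
m_H(\Si_r)=\frac{|\Si_r|^{1/2}}{(16\pi)^{3/2}}\int_{\Si_r}2H_0(H_0-H)\,d\sigma + o(1).
\end{equation*}
Using $H_0=\frac{2}{\rho_r}+o(\rho_r^{-1})$ and $|\Si_r|^{1/2}/(16\pi)^{1/2}=\rho_r(1+o(1))$, the prefactor $\frac{|\Si_r|^{1/2}}{(16\pi)^{3/2}}\cdot 2H_0$ collapses to $\frac{1}{8\pi}(1+o(1))$, converting the leading term into exactly $\frac{1}{8\pi}\int_{\Si_r}(H_0-H)\,d\sigma = m_{BY}(\Si_r)$ up to $o(1)$.

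The hard part will be controlling the two error integrals uniformly, specifically showing $\int_{\Si_r}(H_0-H)^2\,d\sigma=o(r^{-1})$ rather than merely $O(r^{-2\tau})$, since the Hawking mass carries an extra factor of $|\Si_r|^{1/2}\sim r$ relative to the Brown–York mass and a crude $O(r^{-2\tau})$ bound would only give $o(1)$ when $\tau>\tfrac12$ — which is in fact exactly the allowed range, so this should close, but the bookkeeping must be done carefully. The other delicate point is justifying that the intrinsic metric being nearly round (via the trace-free second fundamental form estimate) implies the Nirenberg embedding $H_0$ is pointwise close to the model round value; here I would invoke the quantitative stability of the embedding, using Nirenberg's estimates \cite{Nr} in the same way they are used in the proof of Theorem~\ref{BYmass}, so that the two theorems share the same analytic core. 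With these estimates in hand, Theorem~\ref{BYmass} yields $\lim_{r\to\infty}m_{BY}(\Si_r)=m_{ADM}(M)$, and the displayed identity transfers this limit to $m_H(\Si_r)$, completing the proof.
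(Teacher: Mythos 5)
Your strategy --- reducing the Hawking mass to the Brown--York mass through the Nirenberg embedding --- is genuinely different from the paper's proof, which never touches the isometric embedding here: the paper expands $H$ against the Euclidean mean curvature $\hat H$ of $\Si_r$ in the background chart (Lemma~\ref{difference of mean curvature} together with \eqref{3rdterminmeancurv}), kills the quadratic term $16\pi-\int\hat H^2\,d\sigma^0=-2\int|\overset{\circ}{\hat A}|^2\,d\sigma^0=O(r^{-2\tau})$ by Gauss--Bonnet and Corollary~\ref{decay2ndformsinR3}, and reads off the ADM integrand directly. Your route could in principle work, but as written it has a genuine gap at the step ``the $H_0^2-16\pi$ piece \dots contribute[s] $o(1)$ after normalization.'' What you establish is only $\int_{\Si_r}H_0^2\,d\sigma=16\pi+o(1)$; since the normalization multiplies by $|\Si_r|^{1/2}/(16\pi)^{3/2}\sim r$, an $o(1)$ error in the bracket produces an $o(r)$ error in $m_H$, not $o(1)$. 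What you actually need is
\begin{equation*}
16\pi-\int_{\Si_r}H_0^2\,d\sigma = O(r^{-2\tau})=o(r^{-1}),
\end{equation*}
and this does \emph{not} follow from the estimates you invoke. By Gauss--Bonnet, $\int_{\Si_r}H_0^2\,d\sigma-16\pi=2\int_{\Si_r}|\overset{\circ}{A_0}|^2\,d\sigma$, where $A_0$ is the second fundamental form of the embedded image. The pointwise bounds $H_0=\frac{2}{r_0}+O(r^{-1-\tau})$ and $K=\frac{1}{r_0^2}+O(r^{-2-\tau})$ --- which is all that ``$H_0$ is pointwise close to the model round value'' gives you --- only yield $2|\overset{\circ}{A_0}|^2=H_0^2-4K=O(r^{-2-\tau})$, hence $\int_{\Si_r}H_0^2\,d\sigma-16\pi=O(r^{-\tau})$; after the $r$-amplification this leaves an uncontrolled term of size $O(r^{1-\tau})$, divergent for $\tau<1$ and merely bounded for $\tau=1$. (Note also that your stated ``hard part,'' the bound on $\int(H_0-H)^2\,d\sigma$, is in fact the harmless term: it is $O(r^{-2\tau})=o(r^{-1})$ exactly as you say; the dangerous term is the one you dismissed.)

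The missing ingredient is a pointwise bound $|\overset{\circ}{A_0}|=O(r^{-1-\tau})$ on the trace-free second fundamental form of the \emph{embedded} surface, i.e.\ umbilicity to an order quadratically better than what the $C^0$ control of $H_0$ and $K$ can give. This can be extracted from Theorem~\ref{estimateof isometric emb}: after rescaling by $r_0$ one has $\|K-1\|_{C^\alpha}=O(r^{-\tau})$, so the embedding is $C^{2,\alpha}$-close to the standard embedding $i_0$ at scale $O(r^{-\tau})$; since the second fundamental form of $i_0$ is the round metric itself, the rescaled trace-free part is $O(r^{-\tau})$, and unrescaling gives $|\overset{\circ}{A_0}|=O(r^{-1-\tau})$, hence $\int|\overset{\circ}{A_0}|^2\,d\sigma=O(r^{-2\tau})$, which closes the gap. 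With that estimate in place, your cross-term computation is correct (the prefactor does collapse to $\frac{1}{8\pi}$, and pulling $2H_0$ out of the integral costs only $O(r^{1-2\tau})$), and Theorem~\ref{BYmass} then finishes the proof. The comparison with the paper is instructive: the paper's choice of reference mean curvature $\hat H$ makes the analogous trace-free bound available for free (Corollary~\ref{decay2ndformsinR3}, proved directly from the nearly-round hypotheses), whereas your choice of $H_0$ forces you to pay for it with the full strength of the $C^{2,\alpha}$ embedding estimate.
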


The remaining of this paper is organized  in the following way. In  Section 2 we
will discuss the  geometry of nearly round surfaces,
show that many interesting surfaces are nearly round and  present
some useful formulae. In Section 3 we will show some estimates of
isometric embedding and  in Section 4 we will prove the main theorems.

\section{Geometry of nearly round  surfaces }

In this section, we want to give some examples of nearly round
surfaces and to investigate their geometric properties,  and  also
we will derive some basic formulae which will be used later. Let us
begin with some interesting examples.

\begin{exam}
Constant mean curvature (CMC) surfaces constructed  in \cite{HY}
are nearly round at the infinity of the manifold .
\end{exam}

Note that the CMC surfaces constructed in \cite{HY} are convex at
infinity, and then by Propositions 3.5, 3.9 and 3.12 in \cite{HY} we
see that they are nearly round at infinity of the manifolds.

Also, by a direct computations, it is not difficult to see that

\begin{exam}
Let $(M,g, x^i)$ , $1\leq i\leq 3$, be an AF manifold with
admissible coordinates $x^i$, then the coordinate sphere
$S_r=\{(x^1, x^2, x^3)|$\quad$ (x^1)^2+(x^2)^2+(x^3)^2=r^2\}$ is
nearly round when $r$ tends to infinity.
\end{exam}

Our next example relates to the surfaces in  Kerr solution to vacuum
Einstein equations.

\begin{exam}
The well-known  Kerr
metric is given by
\begin{equation}
\begin{split}
ds^2 =&-(1-\frac{2mr}{r^2+ a^2 \cos^2 \theta})dt^2 -\frac{2ma r \sin
^2
\theta}{r^2+ a^2 \cos^2 \theta}(dt d\phi +d\phi dt)\\
&+\frac{r^2+ a^2 \cos^2 \theta}{r^2 -2m r +a^2}dr^2 + (r^2+ a^2
\cos^2)d\theta^2 \\
&+\frac{\sin^2 \theta}{(r^2+ a^2 \cos^2 \theta)^2}[(r^2 +a^2)^2 -
a^2 (r^2-2mr +a^2) \sin^2 \theta]d\phi^2.
\end{split}
\end{equation}
For instance, see page 261 in \cite{Ca}.
Let $(M, g)$ be the slice  with $t= const.$, then it can be shown
directly that $(M, g)$ is an AF manifold. Let $\Si_\tau$ be the
surface in $(M,g)$ with $r=\tau$, then one can verify that $\Si_\tau$
is nearly round in $(M, g)$  when $\tau$ goes to infinity. Indeed,
if we let $e_1 = (\tau^2+ a^2 \cos^2)^{-\frac12} \frac{\p}{\p
\theta}$, $e_2= \frac{\tau^2+ a^2 \cos^2 \theta}{\sin
\theta}[(\tau^2 +a^2)^2 - a^2 (\tau^2-2m\tau +a^2) \sin^2
\theta]^{-\frac12}\frac{\p}{\p \phi}$, then it is easy to see that
$e_1$, $e_2$ is an  orthonormall frame of $\Si_\tau$, and in  this
frame the second fundamental form of $\Si_\tau$
with respect to outward unit normal vector in $(M,g)$ is
$$
A_{11}=\frac{2}{\tau}
\lf(\frac{1-\frac{2m}{\tau}+\frac{a^2}{\tau^2}}{1+\frac{a^2}{\tau^2}\cos^2
\theta}\ri)^\frac12 (1+\frac{a^2}{\tau^2}\cos^2 \theta)^{-1},
\quad
A_{12}=0,
$$
\begin{equation}
\begin{split}
A_{22}=& \frac{2}{\tau^2}
 \lf(\frac{1-\frac{2m}{\tau}+\frac{a^2}{\tau^2}}{1+\frac{a^2}{\tau^2}\cos^2
\theta}\ri)^\frac12 \lf(\frac{ 1+\frac{a^2}{\tau^2}\cos^2 \theta
}{(1+\frac{a^2}{\tau^2})^2 -\tau^{-2}
(1-\frac{2m}{\tau}+\frac{a^2}{\tau^2})a^2\sin^2 \theta}\ri)\\
&\cdot \frac{1}{(\tau^2 +a^2 \cos^2 \theta)^2} \left\{\tau^5 +2a^2
\tau^3 \cos^2 \theta -a^2 \tau^2 m \sin^2 \theta\right.
\\ & \left. + a^4 \tau (\cos
2\theta +\sin^4
\theta)
+m a^4 \sin^2 \theta \cos^2 \theta \right\}.
\end{split}\nonumber
\end{equation}
When $\tau$ tends to infinity, we have
$$
A_{11}=\frac{2}{\tau}-\frac{2m}{\tau^2} +O(\tau^{-3}),
\quad
A_{12}=0,
$$
$$
A_{22}=\frac{2}{\tau}-\frac{2m}{\tau^2} +O(\tau^{-3}).
$$
Hence
$$
|\overset\circ A|\leq C \tau^{-3}.
$$
Similarly  we  have
$$
|\nabla \overset \circ A|\leq C \tau^{-4}.
$$
Here $C$ is a constant  independent of $\tau$. Hence, $\Si_\tau$
is a nearly round surface when $\tau$  tends to infinity.

\end{exam}

\begin{lemm}\label{estimate of area}
Let $\Si_r$ be  nearly round  surfaces in $(M, g)$  when $r$ goes to
infinity, then there is a positive constant $\Lambda$ which is
independent of $r$, and with $\Lambda^{-1} r^2 \leq \Area
(\Si_r)\leq \Lambda r^2$.
\end{lemm}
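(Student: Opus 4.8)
The upper bound is immediate: assumption (4) gives $\Area(\Si_r)\le Cr^2$, so one may take $\Lambda\ge C$. The content of the lemma is therefore the lower bound $\Area(\Si_r)\ge\Lambda^{-1}r^2$, and the plan is to first extract from the near-umbilicity the fact that the mean curvature is almost a constant $\bar H$ satisfying $\bar H^2\,\Area(\Si_r)\to 16\pi$, and then to pin down the scale by showing $\bar H\le C/r$.

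For the first step I would combine the Gauss equation $K=K_{\mathrm{amb}}+\tfrac14 H^2-\tfrac12|\overset\circ A|^2$ with Gauss--Bonnet on the topological sphere $\Si_r$. Since $(M,g)$ is AF of order $\tau$, the ambient curvature decays like $O(r^{-2-\tau})$, so $\int_{\Si_r}K_{\mathrm{amb}}\,d\sigma=O(r^{-\tau})$ by (4), while $\int_{\Si_r}|\overset\circ A|^2\,d\sigma=O(r^{-2\tau})$ by (1) and (4). Integrating the Gauss equation then yields
\begin{equation}
\int_{\Si_r}H^2\,d\sigma=16\pi+O(r^{-\tau}).\nonumber
\end{equation}
Next, the contracted Codazzi equation expresses $\nabla H$ through $\div_{\Si_r}\overset\circ A$ and the ambient Ricci tensor, so (1) together with the curvature decay gives $|\nabla H|\le Cr^{-2-\tau}$; using (3) the oscillation of $H$ over $\Si_r$ is then $O(r^{-1-\tau})$. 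Writing $\bar H$ for the average of $H$, one finds $\int_{\Si_r}(H-\bar H)^2\,d\sigma=O(r^{-2\tau})$, whence $\bar H^2\,\Area(\Si_r)=16\pi+O(r^{-\tau})$.

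This reduces the lower bound on the area to the upper bound $\bar H\le C/r$, and here lies the real difficulty: the quantity $\int_{\Si_r}H^2\,d\sigma$ is scale-invariant, so Gauss--Bonnet alone cannot fix the size of $\Si_r$, and the scale must be supplied by the distance function $r(x)$ through the AF structure. The plan is to use (2) and (3) to place $\Si_r$ in the region $\{r\le r(x)\le Cr\}$ of the end and to show that it bounds a domain $\Omega_r$ containing an essentially Euclidean ball of radius comparable to $r$, so that $\mathrm{Vol}(\Omega_r)\ge c\,r^3$. Invoking the asymptotically Euclidean isoperimetric inequality on the end then gives $\Area(\Si_r)\ge c\,\mathrm{Vol}(\Omega_r)^{2/3}\ge c'\,r^2$ directly; equivalently, via the Heintze--Karcher inequality together with $\bar H\approx\mathrm{const}$ one obtains $\bar H\le C/r$, closing the Gauss--Bonnet estimate. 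The main obstacles are precisely this scale-fixing step and the justification of the curvature-decay estimates $K_{\mathrm{amb}},\ \mathrm{Ric}=O(r^{-2-\tau})$ and of a uniform isoperimetric constant on the AF end.
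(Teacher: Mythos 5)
Your proposal is correct in substance, but it takes a genuinely different---and much heavier---route than the paper. The paper's proof of the lower bound is a one-step divergence-theorem argument: since every point of $\Si_r$ has Euclidean distance at least $r$ from the base point, the round sphere $\mathbb{S}^2_{r/2}$ lies in the domain enclosed by $\Si_r$; applying the divergence theorem to the unit radial field $X/|X|$, whose divergence $\div_{\hat g}(X/|X|)=2/|X|$ is positive, on the region between $\mathbb{S}^2_{r/2}$ and $\Si_r$ gives
\begin{equation}
\Area(\Si_r,\hat g)\;\geq\;\int_{\Si_r}\frac{X}{|X|}\cdot n\,d\sigma^0\;\geq\;\int_{\mathbb{S}^2_{r/2}}\frac{X}{|X|}\cdot n\,d\sigma^0\;=\;\pi r^2,\nonumber
\end{equation}
and the equivalence of $g$ and $\hat g$ outside a compact set finishes the proof; note that assumptions (1) and (3) are never used. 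Your route via the enclosed ball of radius $\sim r$ and the Euclidean isoperimetric inequality is the same idea (``enclosing a large ball forces large area''), with the isoperimetric inequality playing the role of the paper's explicit flux comparison, and it does work since $g$ is bi-Lipschitz to $\hat g$ on the end, so the isoperimetric constant is uniform. What you lose is economy: your steps 1--2 (Gauss equation, Gauss--Bonnet, Codazzi, oscillation of $H$, $\bar H^2\Area\approx 16\pi$) are logically superfluous for that route, since the isoperimetric bound gives $\Area\geq c'r^2$ with no reference to $H$ at all. Those curvature estimates are exactly what the paper establishes in the \emph{next} lemma (Lemma \ref{decay2ndforms}), where the present area lemma is used as an input---so the paper's ordering deliberately avoids fixing the scale by curvature methods, which, as you correctly observed, cannot be done by Gauss--Bonnet alone. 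Your alternative closing via Heintze--Karcher is the weakest part of the proposal: it requires mean-convexity of $\Si_r$ (with respect to which metric?) and a justification of the inequality near infinity of an AF manifold, which would force you to first relate $A$ and $\hat A$---more machinery than the lemma needs. Finally, be aware that both your argument and the paper's rest on the implicit assumption that $\Si_r$ actually encloses the base point: conditions (1)--(4) alone do not force this (a unit round sphere centered at Euclidean distance $2r$ satisfies all four), so the enclosing property is tacitly part of what ``nearly round surfaces at infinity'' means in either proof.
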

\begin{proof}

It suffices to show the lower bound of the area. Since $g$ is an AF
metric, without loss of generality we may assume $g$ and $\hat g$
are equivalent on $M\setminus K$. Here and in the sequel, $\hat g$ is
background Euclidean metric on $M\setminus K$. Thus, we only need to
show $\Area (\Si_r, \hat g)\geq C r^2$, where $\Area (\Si_r, \hat g)$
is the area of $\Si_r$ with respect to metric $\hat g$. From the
second assumption of nearly round sphere surfaces, we know that the
standard sphere with radius $\frac{r}{2}$, denoted by
$\mathbb{S}^2 _{\frac{r}{2}}$, is in the domain enclosed by
$\Si_r$. Let $\Omega$ be the domain enclosed by $\Si_r$ and
$\mathbb{S}^2 _{\frac{r}{2}}$, $X=(X^1, X^2, X^3)$ be the Euclidean
coordinates. In view of
$$
\div_{\hat g} (\frac{X}{|X|})=\frac{2}{|X|}>0,
$$
we have
\begin{equation}
\begin{split}
\int_{\Si_r}\frac{X}{|X|}\cdot n -\int_{\mathbb{S}^2
_{\frac{r}{2}}}\frac{X}{|X|}\cdot n &= \int_\Omega \div_{\hat g}
(\frac{X}{|X|})\\
&\geq 0,
\end{split}\nonumber
\end{equation}
where $n$ is the outward unit normal vector of on one part  of the boundary of $\Omega$, $\Si_r$
and $-n$ is the  outward unit normal vector of on another part of the boundary of $\Omega$, $\mathbb{S}^2 _{\frac{r}{2}}$.
Therefore, we see that
$$
\Area (\Si_r, \hat g)\geq \pi r^2.
$$
This finishes the proof of the Lemma.
\end{proof}

Our next lemma is on the estimation of the decay of the second
fundamental forms of nearly round sphere surfaces.

\begin{lemm}\label{decay2ndforms}
Let $\Si_r$ be  nearly round  surfaces in $(M, g)$ when $r$ goes to
infinity, then we have
$$
|A|\leq C r^{-1},
$$
where $C$ is  a positive  constant independent of $r$.
\end{lemm}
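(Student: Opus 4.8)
The plan is to deduce the bound on the full second fundamental form from a bound on the mean curvature alone. Writing $H=\tr A$ and decomposing $A=\overset\circ A+\tfrac12 Hg$ on the two-dimensional surface $\Si_r$, one has $|A|^2=|\overset\circ A|^2+\tfrac12 H^2$. Since the first assumption in the definition of nearly round surfaces already gives $|\overset\circ A|\le Cr^{-1-\tau}$, which is of lower order than $r^{-1}$, it suffices to establish the pointwise bound $|H|\le Cr^{-1}$.

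First I would obtain an $L^2$ bound on $H$ from the Gauss equation together with the Gauss--Bonnet theorem. The Gauss equation reads $K=\overline K+\det A=\overline K+\tfrac14 H^2-\tfrac12|\overset\circ A|^2$, where $K$ is the intrinsic Gauss curvature of $\Si_r$ and $\overline K$ the ambient sectional curvature along $T\Si_r$. Because $(M,g)$ is AF of order $\tau$, the curvature of $g$ involves two derivatives of $\sigma_{ij}$ and is $O(r^{-2-\tau})$, so $|\overline K|\le Cr^{-2-\tau}$; combined with $\Area(\Si_r)\le\Lambda r^2$ from Lemma \ref{estimate of area} this yields $\big|\int_{\Si_r}\overline K\,d\sigma\big|\le Cr^{-\tau}$. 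Likewise $\int_{\Si_r}|\overset\circ A|^2\,d\sigma\le Cr^{-2\tau}$. Since $\Si_r$ is a topological sphere, $\int_{\Si_r}K\,d\sigma=4\pi$, and integrating the Gauss equation gives $\tfrac14\int_{\Si_r}H^2\,d\sigma=4\pi+o(1)$, hence $\int_{\Si_r}H^2\,d\sigma\le C$.

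Next I would control the gradient of $H$ using the contracted Codazzi equation $\div A-\nabla H=Q$, where $Q$ is built from the ambient Ricci curvature and satisfies $|Q|\le Cr^{-2-\tau}$. Since $\div A=\div\overset\circ A+\tfrac12\nabla H$, this gives $\nabla H=2\div\overset\circ A-2Q$, and the first assumption (in the form $|\nabla\overset\circ A|\le Cr^{-2-\tau}$) yields $|\nabla H|\le Cr^{-2-\tau}$. Combining this with the diameter bound $\diam(\Si_r)\le Cr$, the oscillation of $H$ over $\Si_r$ is at most $|\nabla H|\,\diam(\Si_r)\le Cr^{-1-\tau}$, so $H$ is nearly constant. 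Finally, writing $\bar H$ for the average of $H$ over $\Si_r$, the variance decomposition together with the $L^2$ bound gives $\bar H^2\,|\Si_r|\le\int_{\Si_r}H^2\,d\sigma\le C$, and the area lower bound $|\Si_r|\ge\Lambda^{-1}r^2$ forces $|\bar H|\le Cr^{-1}$. With the oscillation estimate this produces $|H|\le|\bar H|+Cr^{-1-\tau}\le Cr^{-1}$ pointwise, whence $|A|^2=|\overset\circ A|^2+\tfrac12 H^2\le Cr^{-2}$, as desired.

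I expect the crux to be the gradient estimate on $H$: the $L^2$ bound alone only controls the average size of $H$, and upgrading it to a pointwise bound requires knowing that $H$ cannot oscillate too much, which is exactly what the Codazzi identity together with the decay of $\nabla\overset\circ A$ provide. One should take care that the Codazzi remainder $Q$ genuinely decays like the ambient curvature; this is where the order-$\tau$ decay of the metric, rather than merely its asymptotic flatness, is used.
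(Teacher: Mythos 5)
Your proof is correct, and it follows the same overall skeleton as the paper's: reduce to bounding $H$, derive $|\nabla H|\le Cr^{-2-\tau}$ from the Codazzi equation plus the hypothesis on $\nabla\overset\circ A$, use $\diam(\Si_r)\le Cr$ to make $H$ nearly constant, and fix the scale of $H$ via the Gauss equation, Gauss--Bonnet, and the area bounds of Lemma \ref{estimate of area}. Where you genuinely differ is in how the two pivotal steps are executed, and in both cases your implementation is simpler. For the gradient estimate, the paper does not trace the Codazzi equation: it uses the pointwise orthogonal decomposition $\nabla A=E+F$ (in the style of De Lellis--M\"uller), computes $|E|^2$ explicitly in terms of $|\nabla H|^2$ and the ambient curvature term $w_i=R_{0kil}\bar g^{kl}$, and runs a chain of inequalities to conclude $|\nabla A|\le C\lf(|\nabla\overset\circ A|+r^{-2-\tau}\ri)$; your one-line identity $\nabla H=2\,\div\overset\circ A-2Q$, with $Q$ the tangential part of the ambient Ricci curvature contracted with the normal, reaches the same bound directly (and the paper's extra byproduct, decay of the full $|\nabla A|$, which is quoted later in the proof of Corollary \ref{decay2ndformsinR3}, is recovered at once from $\nabla A=\nabla\overset\circ A+\frac12\nabla H\otimes\bar g$). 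For the scale of $H$, the paper argues pointwise: it sets $r_1=2/H(x_0)$, writes the Gauss equation as $K=r_1^{-2}+r_1^{-1}O(r^{-1-\tau})+O(r^{-2-\tau})$, and integrates by Gauss--Bonnet to get $\Area(\Si_r)/r_1^{2}\approx 4\pi$, from which both bounds of Lemma \ref{estimate of area} yield $C^{-1}r\le r_1\le Cr$; you instead integrate the Gauss equation first to get $\int_{\Si_r}H^2\,d\sigma\le C$, then use the variance inequality $\bar H^2\Area(\Si_r)\le\int_{\Si_r}H^2\,d\sigma$ with the area lower bound, and finally add back the oscillation. Your integral route avoids the mild subtlety in the paper's claim of absorbing the error term $r_1^{-1}O(r^{1-\tau})$ (one must rule out small $r_1$ via a quadratic inequality in $r/r_1$), at the cost of passing through an $L^2$ statement that only becomes pointwise thanks to the oscillation bound, which you correctly identify as the crux and do supply. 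Both arguments use exactly the same hypotheses, so the two proofs are interchangeable in the rest of the paper.
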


\begin{proof}

Let $e_0$, $e_1$ and $e_2$ be the orthonormall frame of $(M,g)$ at
any fixed point of $\Si_r$, $e_1$ and $e_2$ be the tangential
vectors of $\Si_r$. Let $\nabla_k A_{ij}$ be the components of
$\nabla A$, then by a direct computation for $1\leq$ $i$, $j$, $k$
$\leq 2$,  we have
$$
\nabla_i A_{jk} =E_{ijk}+F_{ijk},
$$
where
$$
E_{ijk}=\frac14(\nabla_i H \bar g_{jk}+\nabla_j H \bar
g_{ik}+\nabla_k H \bar g_{ij})-\frac12 w_i \bar g_{jk}+\frac12 (w_j
\bar g_{ik}+w_k \bar g_{ij}).
$$
Here $w_i = R_{0kil}\bar g^{kl}$ and $\bar g_{ij}$ is the induced
metric on $\Si_r$. By the Codazzi equations, we have

$$
\langle E_{ijk}, F_{ijk}    \rangle=0,
$$
and
$$
|E_{ijk}|^2 =\frac35 |\nabla H|^2 +|w|^2 -\langle w_i, \nabla_i H
\rangle.
$$
thus combining these equalities  we have
\begin{equation}
\begin{split}
|\nabla \overset {\circ}A|^2 &=|\nabla A|^2 -\frac12 |\nabla H|^2
 \geq |E|^2-\frac12 |\nabla H|^2 \\
&\geq \frac{1}{20}|\nabla H|^2 -C |w|^2
\geq \frac{1}{200}(|\nabla A|^2- |\nabla \overset {\circ}A|^2)
-C|w|^2,
\end{split}
\end{equation}
where $C$ is a  constant. Thus, we see that
$$
|\nabla A| \leq C (|\nabla \overset\circ A|+  r^{-2-\tau}).
$$
Combining this  with the first assumption of nearly round sphere
surfaces we have
$$
|\nabla H |\leq |\nabla A| \leq  C r^{-2-\tau}.
$$
Due to the assumption $\diam (\Si_r) \leq C r$, we see that for any
$x\in \Si_r$ we have

\begin{equation}
\begin{split}
|H(x)-H(x_0)|&\leq |\nabla H| \cdot \diam (\Si_r)\\
&\leq C r^{-1-\tau}.\nonumber
\end{split}
\end{equation}
Here $x_0$ is a fixed point on $\Si_r$.
Setting
$$
r_1 =\frac{2}{H(x_0)},
$$
we have $$
H=\frac{2}{r_1}+O(r^{-1-\tau}).
$$
We now claim that there is a    constant $C> 1$ independent of $r$ with
$$
C^{-1} r \leq r_1 \leq C r.
$$
Indeed, due to the assumption $|\overset\circ A| \leq C
r^{-1-\tau}$ and the Gauss equations, we have
$$
K=\frac{1}{r^2 _1}+r^{-1} _{1} O(r^{-1-\tau}) +O(r^{-2-\tau}) ,
$$
where $K$ is the Gauss curvature of $\Si_r$ with  respect to the metric induced
from $g$. Then by the Gauss-Bonnet formula we get

$$
\frac{\Area (\Si_r)}{r^{2} _{1}}+r^{-1} _{1} O(r^{1-\tau})
+O(r^{-\tau})=4\pi.
$$
Together with Lemma \ref{estimate of area},  it follows that the claim
is true. Note that $A_{ij}=\overset\circ A_{ij}
+\frac{H}{2}\bar{g}_{ij}$, where $\bar{g}_{ij}$ is the  metric on
$\Si_r$ induced from $g$. The first assumption and the claim imply
$$
|A|\leq C r^{-1},
$$
which finishes the proof of  the lemma.
\end{proof}

As mentioned before, we may regard $\Si_r$ as a surface in
$M\setminus K$ with the Euclidean  metric $\hat g$. Our next lemma
is about the relationship between $A$ and $\hat A$, here and in the
sequel, $\hat A$ is the second fundamental forms of $\Si_r$ with
respect to outward unit normal vector and metric $\hat g$.

\begin{lemm}\label{relation2ndform}
Let $\rho$ be the Euclidean distance function to $\Si_r$,
$\frac{\p}{\p x^i}$, $1\leq i \leq 3$, be the standard coordinate
frame in $\mathbb{R}^3$, $\Gamma^k _{ij}$ is the Christoffel
symbols of metric $g$ with respect to $\frac{\p}{\p x^i}$, then
$$
\hat A(X,Y)=|\nabla_g \rho|\cdot A(X,Y)+X^i Y^j\Gamma^k
_{ij}\frac{\p \rho}{\p x^k},
$$
 where $X=X^i\cdot \frac{\p}{\p x^i}$ and $Y=Y^i\cdot \frac{\p}{\p x^i}$ are tangential vectors of
 $\Si_r$ and $\nabla_g$ is the Livi-Civita connection with respect to $g$.
\end{lemm}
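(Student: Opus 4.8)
The plan is to express both second fundamental forms as suitably normalized Hessians of the single function $\rho$ and then to exploit the vanishing of the Euclidean Christoffel symbols in the standard coordinates. Since $\rho$ is the Euclidean distance function to $\Si_r$, I may work in a tubular neighborhood where $\rho$ is smooth, $\Si_r=\{\rho=0\}$, and the Euclidean gradient $\hnabla\rho$ has components $\frac{\p\rho}{\p x^k}$ with $|\hnabla\rho|_{\hat g}=1$; hence the $\hat g$-unit normal is $\hat\nu=\hnabla\rho$. The $g$-unit normal is instead $\nu=\nabla_g\rho/|\nabla_g\rho|$. The two normals point in different directions in general, but each is normal to $\Si_r$ for its own metric, and the tangent plane $T_x\Si_r=\ker d\rho$ is common to both metrics; this is exactly what lets me compare $\hat A$ and $A$ on the same pair $X,Y\in T_x\Si_r$.

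First I would record the level-set identity for the second fundamental form. Fixing $x\in\Si_r$ and extending $X,Y$ to tangent fields, for a metric $h$ with unit normal $\nu_h=\nabla_h\rho/|\nabla_h\rho|_h$ I differentiate $h(\nu_h,Y)=0$ in the direction $X$ to get $h(\nabla^h_X\nu_h,Y)=-h(\nu_h,\nabla^h_X Y)=-(\nabla^h_X Y)\rho/|\nabla_h\rho|_h$. Because $Y\rho$ vanishes identically along $\Si_r$, the term $X(Y\rho)$ of the Hessian drops out, so $\nabla^2_h\rho(X,Y)=-(\nabla^h_X Y)\rho$, where $\nabla^2_h\rho$ denotes the Hessian of $\rho$ with respect to $h$. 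Combining these gives the clean identity $h(\nabla^h_X\nu_h,Y)=\nabla^2_h\rho(X,Y)/|\nabla_h\rho|_h$. Taking the shape-operator convention $A(X,Y)=g(\nabla^g_X\nu,Y)$ and likewise for $\hat A$, I read off $\hat A(X,Y)=\nabla^2_{\hat g}\rho(X,Y)$ (using $|\hnabla\rho|_{\hat g}=1$) and $\nabla^2_g\rho(X,Y)=|\nabla_g\rho|\cdot A(X,Y)$.

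Next I would compute the two Hessians in the coordinate frame $\frac{\p}{\p x^i}$. There $\nabla^2_g\rho_{ij}=\frac{\p^2\rho}{\p x^i\p x^j}-\Gamma^k_{ij}\frac{\p\rho}{\p x^k}$, whereas $\nabla^2_{\hat g}\rho_{ij}=\frac{\p^2\rho}{\p x^i\p x^j}$ since the Christoffel symbols of the Euclidean metric vanish identically in standard coordinates. Subtracting, the pure second-derivative part cancels and $\nabla^2_{\hat g}\rho(X,Y)-\nabla^2_g\rho(X,Y)=X^iY^j\Gamma^k_{ij}\frac{\p\rho}{\p x^k}$ for $X=X^i\frac{\p}{\p x^i}$, $Y=Y^j\frac{\p}{\p x^j}$. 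Substituting the two identities from the previous step yields $\hat A(X,Y)=|\nabla_g\rho|\cdot A(X,Y)+X^iY^j\Gamma^k_{ij}\frac{\p\rho}{\p x^k}$, which is the asserted formula.

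There is no hard estimate here: once the level-set picture is fixed the computation is purely algebraic. The only points demanding care are bookkeeping ones, namely keeping the sign convention for $A$ identical on both sides so that the Christoffel term carries a plus sign (this pins down $A=\nabla^2_g\rho/|\nabla_g\rho|$), and correctly normalizing by $|\nabla_g\rho|$ on the $g$-side against the trivial factor $1$ on the Euclidean side that comes from $\rho$ being the Euclidean distance function. I would therefore state the level-set Hessian identity once, apply it to both $g$ and $\hat g$, and obtain the result directly from the vanishing of the Euclidean Christoffel symbols.
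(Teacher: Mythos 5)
Your proposal is correct and follows essentially the same route as the paper: both proofs reduce $A$ and $\hat A$ to the Hessian of the distance function $\rho$ (normalized by $v(\rho)=|\nabla_g\rho|$ on the $g$-side and by $1$ on the Euclidean side), write the two Hessians in the coordinate frame, and let the vanishing of the Euclidean Christoffel symbols produce the correction term $X^iY^j\Gamma^k_{ij}\frac{\p\rho}{\p x^k}$. The only cosmetic difference is that you derive the level-set identity by differentiating $h(\nu_h,Y)=0$, while the paper obtains it by decomposing $\nabla_XY$ into tangential and normal parts; these are the same Weingarten computation.
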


\begin{proof}
By the definition of $\rho$,  on $\Si_r$ we have
\begin{equation}
\begin{array}{rcl}
\nabla^2 \rho (X, Y)&=&XY(\rho)-\nabla_X Y (\rho)\\
&=&-\nabla_X Y (\rho)\\
&=& -\langle\nabla_X Y , v  \rangle v(\rho)\\
&=& A(X,Y)v(\rho),
\end{array}\nonumber
\end{equation}
where $v$ is the outward unit normal vector of $\Si_r$. Again,  a
direct computations gives
$$
v=|\nabla_g \rho|^{-1} g^{ij}\frac{\p \rho}{\p x^i}\frac{\p}{\p
x^j},
$$
Hence, we have
$$
v(\rho)=|\nabla_g \rho|,
$$
$$
A(X,Y)=\frac{1}{|\nabla_g \rho|}\nabla^2 \rho (X, Y)
$$
and
$$
\nabla^2 \rho (X, Y)=X^i Y^j \frac{\p^2 \rho}{\p x^i \p x^j}-X^i
Y^j\Gamma^k _{ij}\frac{\p \rho}{\p x^k}.
$$
Similarly  we have

$$
\hat A(X,Y)=X^i Y^j \frac{\p^2 \rho}{\p x^i \p x^j },
$$
Combining these formulas together, we have
$$
\hat A(X,Y)=|\nabla_g \rho|\cdot A(X,Y)+X^i Y^j\Gamma^k
_{ij}\frac{\p \rho}{\p x^k}.
$$
\end{proof}

Let $v=\sum_{i=1}^3 v^i \frac{\p}{\p x^i} $, then $h_{ij}=g_{ij}-v_i
v_j$, $1\leq i$, $j\leq 3$. Then $h_{ij}dx_idx_j$ is the induced
metric on $\Si_r$ in $(M, g)$. Define $h^{ij}:=g^{is}g^{jt}h_{st}$.
We have
\begin{lemm}\label{2ndfundamentalforms}
Let $\rho$ be the  Euclidean distance function to $\Si_r$, then on
$\Si_r$, we have
$$
\frac{\p^2 \rho}{\p x^i \p x^j}= B_{ij} +\frac{\hat
H}{2}h_{ij},%=\overset{\circ}{\hat A}((\frac{\p}{\p x^i})^T,(\frac{\p}{\p x^j})^T ) +\frac{\hat H}{2}h^{ij}+O(r^{-1-\tau}).
$$
where $B_{ij}=\overset{\circ}{\hat A}((\frac{\p}{\p x^i})^T,
(\frac{\p}{\p x^j})^T )$, and $(\frac{\p}{\p x^i})^T$ is the
tangential part of $\frac{\p}{\p x^i}$.
\end{lemm}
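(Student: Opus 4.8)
The plan is to read the full coordinate Hessian $\frac{\p^2\rho}{\p x^i\p x^j}$ directly off the fact that $\rho$ is a \emph{Euclidean} distance function, and then to split its tangential part into trace-free and pure-trace pieces. The starting observation is that in the standard coordinates the Christoffel symbols of $\hat g$ vanish, so the matrix $\frac{\p^2\rho}{\p x^i\p x^j}$ is literally the $\hat g$-Hessian of $\rho$, regarded as a symmetric bilinear form on $\R^3$. This is precisely the object that the proof of Lemma \ref{relation2ndform} already identified, via $\hat A(X,Y)=X^iY^j\frac{\p^2\rho}{\p x^i\p x^j}$, as computing the Euclidean second fundamental form on tangential vectors. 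The task is therefore to promote this tangential identity to a statement about the entire $3\times 3$ matrix.

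First I would use the defining property $|\hat\nabla\rho|\equiv 1$ of the distance function. Differentiating $\delta^{kl}\frac{\p\rho}{\p x^k}\frac{\p\rho}{\p x^l}=1$ in $x^i$ gives $\frac{\p^2\rho}{\p x^i\p x^k}\,\hn^k=0$, where $\hn=\hat\nabla\rho$ is the Euclidean outward unit normal, $\hn^k=\frac{\p\rho}{\p x^k}$. Thus the symmetric matrix $\frac{\p^2\rho}{\p x^i\p x^j}$ annihilates the normal direction, so as a bilinear form it is entirely determined by its restriction to $T\Si_r$. Writing $\frac{\p}{\p x^i}=\big(\frac{\p}{\p x^i}\big)^T+\hn_i\hn$ and using this annihilation, every normal contribution drops out, leaving
$$\frac{\p^2\rho}{\p x^i\p x^j}=\hat A\Big(\big(\tfrac{\p}{\p x^i}\big)^T,\big(\tfrac{\p}{\p x^j}\big)^T\Big),$$
the last equality being exactly Lemma \ref{relation2ndform}.

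Next I would insert the orthogonal splitting $\hat A=\overset{\circ}{\hat A}+\frac{\hat H}{2}\,\bar g$ of the second fundamental form of a surface into its trace-free part and its pure-trace part, valid because $\Si_r$ is two-dimensional and $\tr_{\bar g}\hat A=\hat H$. Evaluated on the tangential projections of the coordinate fields, the trace-free piece is by definition $B_{ij}=\overset{\circ}{\hat A}\big((\tfrac{\p}{\p x^i})^T,(\tfrac{\p}{\p x^j})^T\big)$, while the pure-trace piece is $\frac{\hat H}{2}$ times $\bar g\big((\tfrac{\p}{\p x^i})^T,(\tfrac{\p}{\p x^j})^T\big)$. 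Since $\bar g$ restricted to $T\Si_r$ is the ambient metric evaluated on the tangential projections, this last factor is precisely the induced-metric entry $h_{ij}$. Combining the two displays yields the asserted formula $\frac{\p^2\rho}{\p x^i\p x^j}=B_{ij}+\frac{\hat H}{2}h_{ij}$.

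The only genuinely substantive point—everything else being bookkeeping of the tangential/normal decomposition—is the normal-annihilation step: it is exactly the distance-function property $|\hat\nabla\rho|=1$ that collapses the three-by-three Hessian onto two-dimensional tangential data and lets the trace term reproduce the induced metric rather than some other bilinear form. The one place where I would be careful is consistency of metrics: the projection $(\frac{\p}{\p x^i})^T$, the trace $\hat H$, and the metric $h_{ij}$ must all be taken with respect to the same metric in which $\hat A$ and $\overset{\circ}{\hat A}$ are computed, so that $\bar g\big((\tfrac{\p}{\p x^i})^T,(\tfrac{\p}{\p x^j})^T\big)$ genuinely equals $h_{ij}$ and no spurious cross terms survive.
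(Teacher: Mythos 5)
Your proof is correct and follows essentially the same route as the paper's: both identify the coordinate Hessian of $\rho$ with $\hat A$ evaluated on the tangential projections $(\frac{\p}{\p x^i})^T$ and then split $\hat A$ into its trace-free part plus $\frac{\hat H}{2}$ times the induced metric, your differentiation of $|\hat\nabla\rho|^2=1$ merely making explicit the normal-annihilation step that the paper compresses into ``a direct computation.'' Your closing caveat about metric consistency is well taken: the exact identity requires $h_{ij}$ to be the $\hat g$-induced form $\delta_{ij}-\hat n_i\hat n_j$, whereas the paper's displayed definition $h_{ij}=g_{ij}-v_iv_j$ is taken with respect to $g$, a lower-order discrepancy (of size $O(r^{-1-\tau})$ once multiplied by $\hat H/2$) that the paper's own proof passes over silently.
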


\begin{proof} Let $\hat \nabla$ be the covariant derivatives with
respect to $\hat g$, then by a direct computation gives
\begin{equation}
\begin{split}
\frac{\p^2 \rho}{\p x^i \p x^j}& =\hat \nabla^2 \rho
((\frac{\p}{\p
x^i})^T, (\frac{\p}{\p x^j})^T )\\
&=\hat A ((\frac{\p}{\p x^i})^T, (\frac{\p}{\p x^j})^T )\\
&=\overset{\circ}{\hat A}((\frac{\p}{\p x^i})^T, (\frac{\p}{\p
x^j})^T ) +\frac{\hat H}{2}h_{ij}.
\end{split}
\end{equation}

\end{proof}

Combining Lemma (\ref{decay2ndforms}), Lemma (\ref{relation2ndform})
and Lemma (\ref{2ndfundamentalforms}), we obtain

\begin{coro}\label{estimate of 2ndderivative}

Let $\Si_r$ be   nearly round  surfaces in $(M, g)$ as $r$ goes to
infinity, then on $\Si_r$, we have:

$$
|\frac{\p^2 \rho}{\p x^i \p x^j}|\leq C r^{-1},
$$
where $C$ is a  constant independent of $r$.
\end{coro}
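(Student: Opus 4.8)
The plan is to chain the three preceding lemmas together so that $\frac{\p^2\rho}{\p x^i\p x^j}$ is rewritten in terms of the Euclidean second fundamental form $\hat A$, and then to control $\hat A$ by the $g$-second fundamental form $A$, whose decay is already supplied by Lemma \ref{decay2ndforms}. By Lemma \ref{2ndfundamentalforms} (and the recombination $B_{ij}+\frac{\hat H}{2}h_{ij}=\hat A$ seen in its proof), we have on $\Si_r$ the identity $\frac{\p^2\rho}{\p x^i\p x^j}=\hat A((\frac{\p}{\p x^i})^T,(\frac{\p}{\p x^j})^T)$, so it suffices to estimate $\hat A$ evaluated on the tangential projections of the coordinate frame.

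First I would observe that, since $g$ and $\hat g$ are equivalent on $M\setminus K$, the coordinate vectors $\frac{\p}{\p x^i}$ have norms bounded above and below uniformly in $r$; hence their tangential projections $(\frac{\p}{\p x^i})^T$ are bounded, and the problem reduces to the operator estimate $|\hat A|\leq C r^{-1}$. Next I would invoke Lemma \ref{relation2ndform}, which expresses $\hat A(X,Y)=|\nabla_g\rho|\,A(X,Y)+X^iY^j\Gamma^k_{ij}\frac{\p\rho}{\p x^k}$, and bound the two terms separately.

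For the first term, $|\nabla_g\rho|$ is uniformly bounded: $\rho$ is the Euclidean distance function, so $\frac{\p\rho}{\p x^i}$ is a $\hat g$-unit covector, and the equivalence of the two metrics then bounds $|\nabla_g\rho|$; combined with Lemma \ref{decay2ndforms} this yields a bound $C r^{-1}|X||Y|$ for the first term. For the second term, the decay conditions \eqref{daf2} in Definition \ref{defaf} force the first metric derivatives, and hence the Christoffel symbols, to satisfy $|\Gamma^k_{ij}|=O(r^{-1-\tau})$, while $\frac{\p\rho}{\p x^k}$ is again bounded; this contribution is therefore $O(r^{-1-\tau})|X||Y|$, of strictly lower order. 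Adding the two estimates gives $|\hat A|\leq C r^{-1}$, and the corollary follows at once.

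I do not expect a serious obstacle here, as the statement is essentially a routine synthesis of the three lemmas with the elementary fact that $\Gamma^k_{ij}=O(r^{-1-\tau})$ on the asymptotically flat end. The only point demanding a little care is the uniform boundedness of $|\nabla_g\rho|$ and of the coordinate frame $\frac{\p}{\p x^i}$, both of which rest on the global equivalence of $g$ and $\hat g$ over $M\setminus K$; once that is in hand, the two error terms are controlled and the $r^{-1}$ rate is dictated by Lemma \ref{decay2ndforms}.
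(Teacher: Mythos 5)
Your proof is correct and follows exactly the route the paper intends: the paper states the corollary with no written argument beyond ``Combining Lemma \ref{decay2ndforms}, Lemma \ref{relation2ndform} and Lemma \ref{2ndfundamentalforms}, we obtain,'' and your write-up simply fills in the details of that combination (Hessian of $\rho$ equals $\hat A$ on tangential projections, then $\hat A$ controlled by $|\nabla_g\rho|\,|A|$ plus the Christoffel term of order $O(r^{-1-\tau})$). The auxiliary facts you flag --- boundedness of $|\nabla_g\rho|$ and of the projected coordinate frame, and $\Gamma^k_{ij}=O(r^{-1-\tau})$ from \eqref{daf2} --- are exactly the right ingredients, so there is no gap.
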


As a corollary, we have

\begin{coro}\label{decay2ndformsinR3}
Let $\overset{\circ}{\hat A}$ be the trace free part of $\hat A$,
$\bar D$ be the covariant derivatives of $\Si_r$ with respect to induced metric
from $(M\setminus K, \hat g)$, then we have
$$
|\overset{\circ}{\hat A}|+r |\bar D \overset{\circ}{\hat A}|\leq
Cr^{-1-\tau},
$$
where $C$ is a positive constant  independent of $r$.
\end{coro}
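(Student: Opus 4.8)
The plan is to transfer the known decay of the $g$--trace-free part $\overset{\circ}{A}$ to the $\hat g$--trace-free part $\overset{\circ}{\hat A}$ by means of the pointwise identity of Lemma \ref{relation2ndform}, while carefully distinguishing the two induced metrics. Write $\phi:=|\nabla_g\rho|$ and let $\bar g$, $\bar h$ be the metrics induced on $\Si_r$ by $g$ and by $\hat g$, respectively; thus $\bar D$ is the Levi--Civita connection of $\bar h$ and $\overset{\circ}{\hat A}$ is the $\bar h$--trace-free part of $\hat A$. The asymptotic flatness conditions \eqref{daf1}--\eqref{daf2} give, on $\Si_r$,
$$
\phi=1+O(r^{-\tau}),\qquad |\Gamma^k_{ij}|=O(r^{-1-\tau}),\qquad |\bar g-\bar h|=O(r^{-\tau}),
$$
so Lemma \ref{relation2ndform} reads $\hat A=\phi\,A+T$, where $T(X,Y)=X^iY^j\Gamma^k_{ij}\frac{\p\rho}{\p x^k}$ is the Christoffel term, with $|T|=O(r^{-1-\tau})$. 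Splitting $A=\overset{\circ}{A}+\tfrac{H}{2}\bar g$ and recalling $|H|=O(r^{-1})$ from Lemma \ref{decay2ndforms}, I would rewrite
$$
\hat A=\phi\,\overset{\circ}{A}+\phi\,\tfrac{H}{2}\,\bar h+\Big(\phi\,\tfrac{H}{2}(\bar g-\bar h)+T\Big).
$$

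First I would establish the $C^0$ estimate. Let $\Pi$ denote the $\bar h$--orthogonal projection onto trace-free symmetric $2$--tensors on $\Si_r$; it is tensorial and annihilates every multiple $f\,\bar h$ of the metric. Applying $\Pi$ to the displayed identity removes the pure-trace middle term and yields
$$
\overset{\circ}{\hat A}=\Pi\Big(\phi\,\overset{\circ}{A}+\phi\,\tfrac{H}{2}(\bar g-\bar h)+T\Big).
$$
Each summand on the right is manifestly $O(r^{-1-\tau})$: the first by the first defining property of nearly round surfaces, the second because $\tfrac{H}{2}(\bar g-\bar h)=O(r^{-1})\,O(r^{-\tau})$, and the third by the bound on $T$. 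Since $\bar h$ is uniformly equivalent to the Euclidean metric, $\Pi$ is bounded, and we obtain $|\overset{\circ}{\hat A}|\le C r^{-1-\tau}$.

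For the gradient estimate I would differentiate the last identity with $\bar D$. Because $\bar D\bar h=0$, the projection $\Pi$ commutes with $\bar D$, so $|\bar D\overset{\circ}{\hat A}|\le C|\bar D Q|$ where $Q:=\phi\,\overset{\circ}{A}+\phi\,\tfrac{H}{2}(\bar g-\bar h)+T$. Differentiating $Q$ termwise, every contribution is $O(r^{-2-\tau})$ once one inserts the following inputs: $|\nabla\overset{\circ}{A}|\le Cr^{-2-\tau}$ (first defining property); $\bar D H=\nabla H=O(r^{-2-\tau})$, which is established inside the proof of Lemma \ref{decay2ndforms} and is the decisive gain of one factor $r^{-\tau}$ over the naive $r^{-2}$; $\bar D\bar g=\bar D(\bar g-\bar h)=O(r^{-1-\tau})$, the tangential covariant derivative of the metric difference; and $\p\Gamma=O(r^{-2-\tau})$ from \eqref{daf2}. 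The difference $\bar D-\nabla$ of the two induced connections is itself a tensor of size $O(r^{-1-\tau})$ and contributes only higher-order terms. This gives $|\bar D Q|\le Cr^{-2-\tau}$, hence $r\,|\bar D\overset{\circ}{\hat A}|\le Cr^{-1-\tau}$.

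The main obstacle is the gradient estimate, and inside it the control of $\bar D T$ and of $\bar D\phi$: one has to verify that differentiating the Christoffel term and $|\nabla_g\rho|$ tangentially produces the extra factor $r^{-\tau}$ rather than merely $r^{-1}$ (for $\bar D T$ this rests on $\p\Gamma=O(r^{-2-\tau})$, and for $\bar D\phi$ on the $\hat g$--orthogonality of the Euclidean unit normal $\hat\nu$ to its own tangential derivatives). Conceptually, the entire estimate hinges on preventing the pure-trace part $\tfrac{H}{2}\bar h$ of $\hat A$, whose naive derivative is only $O(r^{-2})$, from contaminating the trace-free part; this is exactly what the two facts ``$\Pi$ kills $f\,\bar h$'' and ``$\bar D H=O(r^{-2-\tau})$'' accomplish.
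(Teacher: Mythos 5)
Your proposal is correct in outline and rests on exactly the same two pillars as the paper's proof: the identity of Lemma \ref{relation2ndform}, and the estimates $|A|\le Cr^{-1}$, $|\nabla A|,\,|\nabla H|\le Cr^{-2-\tau}$ obtained in (the proof of) Lemma \ref{decay2ndforms}. What differs is the bookkeeping. The paper fixes an arbitrary point $p$, takes a $\bar D$-normal frame there, and differentiates the scalar components $\hat A(X_i,X_j)$; the technical core is then the estimation of the frame-derivative terms, namely $|\bar\nabla_{X_3}X_i|\le Cr^{-1-\tau}$ and $|X_3(X_i^k)|\le Cr^{-1}$, extracted from the Gauss formulas together with $|A-\hat A|\le Cr^{-1-\tau}$ and $|\hat v-v|\le Cr^{-\tau}$. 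You replace this by a tensorial scheme: the trace-free projection $\Pi$ (which indeed commutes with $\bar D$ because $\bar D\bar h=0$) and the connection-difference tensor $S=\bar\nabla-\bar D$. This is cleaner, but note that your two bare assertions --- $|S|=O(r^{-1-\tau})$ and $|\bar D(\bar g-\bar h)|=O(r^{-1-\tau})$ --- are where the paper's hard work has been hidden rather than removed: $S$ is controlled by $\bar D(\bar g-\bar h)$ via the Koszul formula, and differentiating the restriction of the ambient tensor $\sigma$ along $\Si_r$ produces, besides the $\p\sigma=O(r^{-1-\tau})$ term, second-fundamental-form corrections of size $|\sigma|\,|\hat A|=O(r^{-\tau})O(r^{-1})$; establishing these is precisely the content of the paper's frame-derivative estimates. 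The same remark applies to $\bar D T$, where in addition to $\p\Gamma=O(r^{-2-\tau})$ you need $\Gamma^k_{ij}\,X(\p\rho/\p x^k)=O(r^{-1-\tau})O(r^{-1})$, i.e.\ Corollary \ref{estimate of 2ndderivative}. Granting these, every term is indeed $O(r^{-2-\tau})$ and your argument closes. One last observation: since $|\nabla H|\le Cr^{-2-\tau}$ is available anyway, the projection $\Pi$ is a convenience rather than a necessity --- the paper in fact proves the stronger full-tensor bound $|\bar D\hat A|\le Cr^{-2-\tau}$ without any trace-free splitting, and the corollary's statement follows from that.
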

\begin{proof}
By Lemma \ref{relation2ndform}  and direct computations, we have
$$
|H-\hat H|\leq Cr^{-1-\tau},
$$
where $C$ is a positive constant  independent of $r$, and hence
$$
|\overset{\circ}{\hat A}|\leq Cr^{-1-\tau}.
$$
Hence it suffices to show the second part estimate of the corollary is
true. Let $p$  be any point of $\Si_r$, $e_1$ and $e_2$ be the
orthonormal frame at $p$   with $\bar D_{e_i} e_j =0$, $1\leq
i$, $j\leq 2$. Let $X_i$, $1\leq i\leq 3$, be one of $e_k$.  Then,
at $p$, we have
\begin{equation}
\begin{split}
(\bar D_{X_3} \hat A)(X_1, X_2)=&X_3 (\hat A(X_1, X_2))\\
=&X_3 (|\nabla_g \rho|)\cdot A(X_1,X_2)+|\nabla_g \rho|X_3
(A(X_1,X_2))\\
&+(X_3 (X^i _1)X^j _2 + X^i _1 X_3 (X^j _2))\Gamma^k _{ij}\frac{\p
\rho}{\p
x^k}+ X^i _1 X^j _2 X_3 (\Gamma^k _{ij})\frac{\p \rho}{\p x^k}\\
&+X^i _1 X^j _2 \Gamma^k _{ij}X_3 (\frac{\p \rho}{\p x^k}).
\end{split}
\end{equation}
Here we assume $X_i =X_i ^k \frac{\p}{\p x^k}$. Since
$\rho$ is Euclidean distance to $\Si_r$,  we have
$$
\sum_{i=1}^3 (\frac{\p \rho}{\p x^i})^2 =1.
$$
Thus,
\begin{equation}
\begin{split}
|\sum_{i=1} ^3 g^{ij} X_3 (\frac{\p \rho}{\p x^i})\frac{\p \rho}{\p
x^j}|&\leq |\sum_{i=1}^3 \sigma^{ij} X_3 (\frac{\p \rho}{\p
x^i})\frac{\p
\rho}{\p x^j}|+C r^{-1-\tau}\\
&\leq  Cr^{-1-\tau},
\end{split}\nonumber
\end{equation}
where  $C$ is a constant  independent of $r$ and $p$ and the
orthonormal frames that we choose. Note that   Corollary
\ref{estimate of 2ndderivative} was used  in the last equality.

Due to the asymptotically flatness of  manifold $(M,g)$, by Lemma (\ref{decay2ndforms}) and Corollary (\ref{estimate of
2ndderivative}) we have
$$
|X_3 (|\nabla_g \rho|)\cdot A(X_1,X_2)|+|X^i _1 X^j _2 X_3 (\Gamma^k
_{ij})\frac{\p \rho}{\p x^k}|+|X^i _1 X^j _2 \Gamma^k _{ij}X_3
(\frac{\p \rho}{\p x^k})|\leq C r^{-2-\tau},
$$
where   $C$ is a positive constant independent of $r$ and $p$ and
the orthonormal frames that we choose. In order to get the estimates
of the remaining part, we need to estimate covariant derivatives of
$X_i$ at $p$ first. Note that by Lemma \ref{relation2ndform}, we see
that there is a  constant $C$ which is independent of $r$  and with
$$
|A-\hat A|\leq C r^{-1-\tau}.
$$
On the other hand, by the fundamental equations of the surface, for
$i=1$, $2$,  we have
\begin{equation}
\begin{split}
 \bar{\nabla}_{X_3} X_i -\bar {D}_{X_3} X_i &=(\nabla_{X_3} X_i
-D_{X_3} X_i)+ (\hat A -A)(X_3, X_i)v +\hat A (X_3 , X_i)(\hat v -v)\\
&=X^i _3 X^k _i \Gamma^l _{ik} \frac{\p}{\p x^l}+(\hat A -A)(X_3,
X_i)v +\hat A(X_3, X_i)(\hat v-v),
\end{split}\nonumber
\end{equation}
where $\nabla$, $\bar \nabla$ is covariant derivatives with respect
to metric $g$ and its induced metric on $\Si_r$ respectively. By a
direct computations, it is not difficult to see that
$$
|\hat v- v|\leq C r^{-\tau},
$$
where $C$ is a positive universal constant independent of $r$.
Hence, by the choice of $X_i$ and decay of $|A-\hat A|$,  we
get
\begin{equation}\label{estimateof covariant derivative}
|\bar \nabla_{X_3} X_i|\leq C r^{-1-\tau},
\end{equation}
at $p$, where $C$ is a  constant independent of $r$  and $p$ and the
orthonormal frames that we choose. Together this with   decay of
$|\bar\nabla A|$ and the equality
$$
X_3 (A(X_1, X_2))=(\bar \nabla_{X_3} A)(X_i, X_j) + A(\bar
\nabla_{X_3} X_i, X_j)+A(\bar \nabla_{X_3} X_j, X_i),
$$
$i$, $j=$ $1$, $2$,  we get
$$
|X_3 (A(X_i, X_j))|\leq C r^{-2-\tau},
$$
where $C$ is a  constant independent of $r$  and $p$ and the
orthonormal frames that we choose. By the choice of $X_i$, we have
for $i=1$, $2$,
$$
\bar D_{X_3}X_i =D_{X_3}X_i -\hat A (X_3, X_i)\hat v=0,
$$
 at $p$,
 and hence
 $$
X_3 (X_i ^k)=\hat g (\hat A (X_3, X_i)\hat v, \frac{\p}{\p x^k})
 $$
at $p$
 which implies
 $$
|X_3 (X_i ^k)|\leq C r^{-1}
 $$
for a  constant $C$ independent of $r$  and $p$ and the orthonormal
frames that we choose. Combining the above estimates, we have
$$
|\bar D \hat A| \leq C r^{-2-\tau}
$$
at $p$ for some universal constant independent of $r$  and $p$ and
the orthonormal frames that we choose. Since $p$ is arbitrary, we
complete to prove the corollary.
\end{proof}

Let $H$ and $\hat H$ be the mean curvature of $\Si_r$ in $(\R^3
\setminus K, g)$ and  $(\R^3 \setminus K, \hat g)$ respectively.
Both of them are  with respect to outward unit normal vector.

\begin{lemm}\label{difference of mean curvature}
Let $\rho$ be the Euclidean distance to $\Si_r$ in $\R^3$, then
\begin{equation}\label{meancuravture}
\begin{split}
H=&\hat H +\frac{H}{2}\sigma_{ij}\frac{\p \rho}{\p x^i}\frac{\p
\rho}{\p x^j}+\frac12 \sigma_{st,i}\frac{\p \rho}{\p x^i}\frac{\p
\rho}{\p x^s}\frac{\p \rho}{\p x^t}\\
&-\sigma_{ij}\frac{\p^2 \rho}{\p x^i \p x^j}-g_{ij,i}\frac{\p
\rho}{\p x^j}
+\frac12 g_{jj,i}\frac{\p \rho}{\p x^i}+O(r^{-1-2\tau}).\\
\end{split}
\end{equation}
Here and in the sequel, $\sigma_{ij,k}=\frac{\p \sigma_{ij}}{\p
x^k}$ and $g_{ij,k}=\frac{\p g_{ij}}{\p x^k}$.

\end{lemm}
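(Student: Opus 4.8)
The plan is to compute both mean curvatures as divergences of the corresponding unit normal fields, exploiting that $\rho$ is the Euclidean distance function to $\Si_r$. Extending the normal off $\Si_r$ as the (normalized) gradient of $\rho$, one has on $\Si_r$
\begin{equation}
H=\div_g\lf(\frac{\nabla_g\rho}{|\nabla_g\rho|}\ri), \qquad \hat H=\div_{\hat g}\lf(\frac{\hat\nabla\rho}{|\hat\nabla\rho|}\ri)=\sum_{j}\frac{\p^2\rho}{\p x^j\p x^j},\nonumber
\end{equation}
the normal-derivative part of the trace dropping out in each case because the extended fields have unit length (so $g(\nabla_\nu\nu,\nu)=\frac12\nu(1)=0$). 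Since $\hat g$ is Euclidean and $\rho$ solves the eikonal equation $\sum_i(\p\rho/\p x^i)^2=1$, the second identity is immediate, and the whole task reduces to expanding the first expression to first order in $\sigma_{ij}$ and reading off the correction to $\hat H$.

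Writing $\div_g\nu=(\det g)^{-1/2}\,\p_j\big((\det g)^{1/2}\nu^j\big)$ with $\nu^j=g^{jk}(\p\rho/\p x^k)/W$ and $W=|\nabla_g\rho|$, I would first expand the scalar ingredients, abbreviating $\rho_i:=\p\rho/\p x^i$: namely $g^{jk}=\delta^{jk}-\sigma_{jk}+O(r^{-2\tau})$, $(\det g)^{1/2}=1+\frac12\sigma_{kk}+O(r^{-2\tau})$, and, using the eikonal equation, $W^2=1-\sigma_{ij}\rho_i\rho_j+O(r^{-2\tau})$ so that $W^{-1}=1+\frac12\sigma_{ij}\rho_i\rho_j+O(r^{-2\tau})$. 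Multiplying these out expresses $(\det g)^{1/2}\nu^j$ as $\rho_j$ plus explicit linear-in-$\sigma$ terms, after which I differentiate in $x^j$ and multiply by $(\det g)^{-1/2}=1-\frac12\sigma_{kk}+\cdots$.

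Two structural simplifications collapse the bookkeeping to the stated formula. Differentiating the eikonal identity gives $\sum_j\rho_j(\p^2\rho/\p x^j\p x^k)=0$, which annihilates the cross terms arising from $\p_j(\sigma_{st}\rho_s\rho_t\rho_j)$ and leaves only the gradient-of-$\sigma$ term $\frac12\sigma_{st,j}\rho_s\rho_t\rho_j$ and a term $\frac12\hat H\,\sigma_{st}\rho_s\rho_t$. Secondly, the contribution $+\frac12\sigma_{kk}\hat H$ produced when $\p_j$ hits the $\frac12\sigma_{kk}\rho_j$ piece cancels exactly against $-\frac12\sigma_{kk}\hat H$ coming from the prefactor $(\det g)^{-1/2}$. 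What survives is precisely $\hat H$ together with $\frac12\hat H\,\sigma_{ij}\rho_i\rho_j$, $\frac12\sigma_{st,i}\rho_i\rho_s\rho_t$, $-\sigma_{ij}(\p^2\rho/\p x^i\p x^j)$, $-\sigma_{ij,i}\rho_j$ and $+\frac12\sigma_{kk,i}\rho_i$. Using $g_{ij,i}=\sigma_{ij,i}$ and $g_{jj,i}=\sigma_{jj,i}$ (the Euclidean part being constant), and replacing $\hat H$ by $H$ in the quadratic term at the cost of $O(r^{-1-2\tau})$ via $|H-\hat H|\le Cr^{-1-\tau}$ from Corollary \ref{decay2ndformsinR3}, yields the claimed expression.

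The main obstacle is the error accounting rather than any single identity: I must confirm that every discarded term is genuinely $O(r^{-1-2\tau})$. The dangerous terms are the quadratic-in-$\sigma$ pieces of $(\det g)^{1/2}\nu^j$ after differentiation, where $\p_j$ may land on a $\rho_k$ and produce $\p^2\rho/\p x^j\p x^k$; here the needed bound is $|\p^2\rho/\p x^i\p x^j|\le Cr^{-1}$ from Corollary \ref{estimate of 2ndderivative}. Combined with $|\sigma|=O(r^{-\tau})$ and $|\p\sigma|=O(r^{-1-\tau})$, this controls all such terms, as well as the products of the prefactor and $W^{-1}$ expansions with the first-order remainder, by $O(r^{-1-2\tau})$. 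Once these estimates are in place, the remaining computation is a finite, if tedious, index manipulation.
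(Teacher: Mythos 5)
Your proposal is correct, but it is organized around a different identity than the paper's proof. The paper never writes $H$ as the divergence of a unit vector field: it computes the full Laplacian $\Delta_g\rho$ in two ways --- geometrically, via an orthonormal frame $\{e_1,e_2,v\}$, as $\Delta_g \rho = H\,|\nabla_g\rho| + v(|\nabla_g\rho|)$, which forces a separate computation of the normal Hessian term, carried out by expanding $v(|\nabla_g\rho|^2)$ to get $v(|\nabla_g\rho|)=\frac12 g^{st}_{,i}\frac{\p\rho}{\p x^i}\frac{\p\rho}{\p x^s}\frac{\p\rho}{\p x^t}+O(r^{-1-2\tau})$ --- and in coordinates as $g^{ij}\frac{\p^2\rho}{\p x^i\p x^j}+\frac{1}{\sqrt g}\frac{\p}{\p x^i}(\sqrt g\, g^{ij})\frac{\p\rho}{\p x^j}$, then equates the two and solves for $H$. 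You instead normalize first and take $\div_g$ of the unit field, so the normal contribution vanishes identically ($g(\nabla_\nu\nu,\nu)=0$) and the whole correction comes from a single expansion of $(\det g)^{1/2}g^{jk}\frac{\p\rho}{\p x^k}W^{-1}$; the two cancellations you isolate (the differentiated eikonal identity $\sum_j\frac{\p\rho}{\p x^j}\frac{\p^2\rho}{\p x^j\p x^k}=0$ killing the Hessian cross-terms, and the $\pm\frac12\sigma_{kk}\hat H$ cancellation against the volume-factor) are exactly what replaces the paper's two-step bookkeeping. The essential ingredients are the same in both arguments --- the eikonal equation, its derivative, the AF decay, and the bound $|\p^2\rho/\p x^i\p x^j|\le Cr^{-1}$ of Corollary \ref{estimate of 2ndderivative} --- so the error accounting is identical, and your appeal to $|H-\hat H|\le Cr^{-1-\tau}$ merely to swap $\hat H$ for $H$ in the quadratic term is legitimate (indeed your own expansion already yields this bound, so the citation of Corollary \ref{decay2ndformsinR3} could be dropped). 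What your route buys is a shorter, one-shot computation with transparent cancellations; what the paper's route buys is that the geometric factor $H|\nabla_g\rho|$ stays visible, which it reuses implicitly elsewhere. One convention both arguments should state: $\rho$ is to be read as the signed Euclidean distance, so that it is smooth across $\Si_r$ and satisfies the eikonal equation with $\hat\nabla\rho$ the outward unit normal.
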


\begin{proof}
 We first note that on $\Si$ we have
$$
\Delta_0 \rho =\hat H,
$$
where $\Delta_0$ is Laplacian with respect to $\R^3$. The unit
normal vector of $\Si_r$ in $\R^3$ is $\nabla_0 \rho =\frac{\p
\rho}{\p x^i}\frac{\p}{\p x^i}$,  denoted by $\hat v$. Let $\{e_1,
e_2, v\}$ be the orthonormal frame in $(M\setminus K, g)$ and $\{e_1,
e_2\}$ the tangential vector of $\Si$. We have
$$
\Delta \rho = \nabla^2 \rho (e_1 , e_1) + \nabla^2 \rho (e_2 ,
e_2)+ \nabla^2 \rho (v , v)
$$
and
\begin{equation}
\begin{split}
 \nabla^2 \rho (e_1 , e_1) + \nabla^2 \rho (e_2 , e_2)&=e_1 e_1 \rho
 -\nabla_{e_1}e_1 \rho +e_2 e_2 \rho
 -\nabla_{e_2}e_2 \rho\\
 &=-(\nabla_{e_1}e_1 +\nabla_{e_2}e_2)\rho\\
 &=\langle\nabla_{e_1}e_1 +\nabla_{e_2}e_2, v\rangle v(\rho)\\
 &=H\cdot v(\rho).
\end{split}
\end{equation}
On the other hand, it is clear that
$$
\hat v = \langle \hat v, v\rangle_g v + T,
$$
where, $T$ is the tangential part of $\hat v$ on $\Si_r$ and
$\langle, \rangle_g$ is the inner product with respect to the metric
$g$. Thus we have
$$
\hat v(\rho)=\langle \hat v, v\rangle_g v(\rho).
$$
Since $\hat v (\rho)=1$ on $\Si_r$, we get
$$
v(\rho)=\langle \hat v, v\rangle_g ^{-1}.
$$
It is easy to see that
$$
v=\frac{\nabla \rho}{|\nabla \rho|}=|\nabla \rho|^{-1} g^{ij}
\frac{\p \rho}{\p x^i}\frac{\p}{\p x^j}
$$
and
\begin{equation}
\begin{split}
\langle \hat v, v\rangle_g&=|\nabla \rho|^{-1}g^{ij} \frac{\p
\rho}{\p x^i}\langle \frac{\p}{\p x^j},  \frac{\p}{\p
x^k}\rangle_g
\frac{\p \rho}{\p x^k}\\
&=|\nabla \rho|^{-1} g^{ij}\frac{\p \rho}{\p x^i}g_{jk}\frac{\p
\rho}{\p x^k}\\
&=|\nabla \rho|^{-1}.
\end{split}
\end{equation}
Hence, we have
$$
v(\rho)=|\nabla \rho|,\quad
\langle \hat v, v\rangle_g = |\nabla \rho|^{-1}.
$$
Since $v$ is the unit norm vector along $\Si_r$, it is clear that
$\langle \nabla_v v, v   \rangle_g =0$, which implies $\nabla_v v$
is a tangential vector on $\Si_r$. Therefore, we get
$$\nabla_v v (\rho)=0.$$
Combining these facts we obtain
$$
\nabla^2 \rho (v, v) = v(|\nabla \rho|)
$$
and
\begin{equation}\label{meancurvatureinafmfd}
\Delta \rho =H |\nabla \rho| + v(|\nabla \rho|).
\end{equation}

Now we compute  the second term in (\ref{meancurvatureinafmfd}).
\begin{equation}
\begin{split}
v(|\nabla \rho|^2)=&v(g^{st}\frac{\p \rho}{\p x^s}\frac{\p
\rho}{\p
x^t})\\
=&|\nabla \rho|^{-1}g^{ij}\frac{\p \rho}{\p x^i}\frac{\p}{\p
x^j}(g^{st}\frac{\p \rho}{\p x^s}\frac{\p \rho}{\p x^t})\\
=&|\nabla \rho|^{-1}g^{ij}\frac{\p \rho}{\p x^i}g^{st}_{,j}\frac{\p
\rho}{\p x^s}\frac{\p \rho}{\p x^t} +2|\nabla
\rho|^{-1}g^{ij}\frac{\p \rho}{\p x^i}g^{st}\frac{\p^2 \rho}{\p x^s
\p x^j}\frac{\p \rho}{\p x^t},
\end{split}
\end{equation}
where
$$
g^{st}_{, j}=\frac{\p g^{st}}{\p x^j}
$$
and
$$
g^{ij}=\delta^{ij}-\sigma_{ij}+O(r^{-2\tau}).
$$
Putting these things together, we get
\begin{equation}
\begin{split}
v(|\nabla \rho|^2)=&\frac{\p \rho}{\p x^i}g^{st}_{,i}\frac{\p
\rho}{\p x^s}\frac{\p \rho}{\p x^t}+2|\nabla \rho|^{-1}\frac{\p
\rho}{\p x^i}\frac{\p^2 \rho}{\p x^i \p x^j}\frac{\p \rho}{\p
x^j}\\
&-2|\nabla \rho|^{-1}\sigma_{ij}\frac{\p \rho}{\p x^i}\frac{\p^2
\rho}{\p x^s \p x^j}\frac{\p \rho}{\p x^s}-2|\nabla
\rho|^{-1}\frac{\p \rho}{\p x^i}\frac{\p^2 \rho}{\p x^i \p
x^s}\frac{\p \rho}{\p x^t}\sigma_{st}\\
&+ O(r^{-1-2\tau}).
\end{split}
\end{equation}
 From
$$
\sum_{i} \frac{\p \rho}{\p x^i}\frac{\p^2 \rho}{\p x^i \p
x^s}=\frac12 \sum_{i} \frac{\p}{\p x^s}((\frac{\p \rho}{\p
x^i})^2)=0
$$
and
$$
v(|\nabla \rho|^2)= 2 |\nabla \rho|v(|\nabla \rho|),
$$
 we get
 $$
v(|\nabla \rho|)=\frac12 g^{st}_{,i}\frac{\p \rho}{\p x^i}\frac{\p
\rho}{\p x^s}\frac{\p \rho}{\p x^t}+O(r^{-1-2\tau}).
 $$
Thus, we have
$$
\Delta \rho =H |\nabla \rho| +\frac12 g^{st}_{,i}\frac{\p \rho}{\p
x^i}\frac{\p \rho}{\p x^s}\frac{\p \rho}{\p x^t}+O(r^{-1-2\tau}).
$$
On the other hand, by the definition of Laplacian operator we
have
\begin{equation}
\begin{split}
\Delta \rho &=g^{ij} \frac{\p^2 \rho}{\p x^i \p
j}+\frac{1}{\sqrt
g}\frac{\p}{\p x^i}(\sqrt {g} g^{ij})\frac{\p \rho}{\p x^j}\\
&=\hat H -\sigma_{ij}\frac{\p^2 \rho}{\p x^i \p x^j}+
g^{ij}_{,i}\frac{\p \rho}{\p x^j}+g^{ij}\frac{1}{\sqrt
g}\frac{\p}{\p x^i}(\sqrt g)\frac{\p \rho}{\p x^j}.
\end{split}
\end{equation}
Noticing that
$$
\frac{\p}{\p x^i}(\sqrt g)= \frac12 g_{jj,i} + O(r^{-1-2\tau})
$$
and
$$
g^{ij}_{,i}=-g_{ij,i},
$$
we get
\begin{equation}\label{meancuravture}
\begin{split}
H=&\hat H +\frac{H}{2}\sigma_{ij}\frac{\p \rho}{\p x^i}\frac{\p
\rho}{\p x^j}+\frac12 \sigma_{st,i}\frac{\p \rho}{\p x^i}\frac{\p
\rho}{\p x^s}\frac{\p \rho}{\p x^t}\\
&-\sigma_{ij}\frac{\p^2 \rho}{\p x^i \p x^j}-g_{ij,i}\frac{\p
\rho}{\p x^j}
+\frac12 g_{jj,i}\frac{\p \rho}{\p x^i}+O(r^{-1-2\tau})\\
=&\hat H+O(r^{-1-\tau})+O(r^{-1-2\tau})
\end{split}
\end{equation}
\end{proof}

In the sequel, we want to calculate the integral of $ H-\hat H$ on
$\Si_r$. Let us begin with
\begin{equation}\label{3rdterminmeancurv}
\begin{split}
&\int_{\Si}\sigma_{st,i}\frac{\p \rho}{\p x^i}\frac{\p \rho}{\p
x^s}\frac{\p \rho}{\p x^t}d\sigma^0 =\int_{\Si}\frac{\p}{\p
x^i}(\sigma_{st}\frac{\p \rho}{\p x^s})\frac{\p \rho}{\p
x^i}\frac{\p \rho}{\p x^t}d\sigma^0
-\int_{\Sigma}\sigma_{st}\frac{\p^2 \rho}{\p x^i \p x^s}\frac{\p
\rho}{\p x^i}\frac{\p \rho}{\p x^t}d\sigma^0\\
&=-\int_{\Si}(\delta_{it}-\frac{\p \rho}{\p x^i}\frac{\p \rho}{\p
x^t})\frac{\p}{\p x^i}(\sigma_{st}\frac{\p \rho}{\p x^s})d\sigma^0
+\int_{\Si}\frac{\p}{\p x^t}(\sigma_{st}\frac{\p \rho}{\p
x^s})d\sigma^0\\
&=-\int_{\Si}\hat H \sigma_{st}\frac{\p \rho}{\p x^s}\frac{\p
\rho}{\p x^t}d\sigma^0 +\int_{\Si}\sigma_{st,t}\frac{\p \rho}{\p
x^s}d\sigma ^0 +\int_{\Si}\sigma_{st}\frac{\p^2 \rho}{\p x^s \p
x^t}d\sigma^0,
\end{split}
\end{equation}
where $d\sigma^0$ is the area element with respect to the Euclidean
induced metric, and we have used the divergence theorem in the
last equality.  By Lemma (\ref{estimate of area}), we have
\begin{equation}
\begin{split}
\int_{\Si}(H-\hat H)d \sigma =&\int_{\Si}(H-\hat H)d\sigma^0
+O(r^{1-2\tau})\\
=&\frac12 \int_{\Si}(H-\hat H)\sigma_{st}\frac{\p \rho}{\p
x^s}\frac{\p \rho} {\p x^t}d\sigma^0 +\frac12
\int_{\Si}(g_{ii,j}-g_{ij,i})\frac{\p
\rho}{\p x^j}d\sigma^0 \\
&-\frac12 \int_{\Si}\sigma_{st}\frac{\p^2 \rho}{\p x^s \p
x^t}d\sigma^0 +O(r^{1-2\tau}).
\end{split}
\end{equation}
Noticing that
$$H-\hat H = O(r^{-1-\tau})$$
we have
\begin{equation}\label{integral of diff meancurv}
\int_{\Si}(H-\hat H)d \sigma =\frac12
\int_{\Si}(g_{ii,j}-g_{ij,i})\frac{\p \rho}{\p x^j}d\sigma^0
-\frac12 \int_{\Si}\sigma_{st}\frac{\p^2 \rho}{\p x^s \p
x^t}d\sigma^0 +O(r^{1-2\tau}).
\end{equation}

\medskip

\section{Estimations of isometric embedding of nearly round   surfaces}

In this section, we study the isometric embedding of nearly round
surfaces, and the main purpose is to get the expansion of the mean
curvature nearly round   surfaces at the infinity of $(M,g)$.

 Let $(\mathbf{S}^2, g_0)$ be the
standard unit sphere and $i_0$ an isometric embedding of
$(\mathbf{S}^2, g_0)$ into $\mathbb{R}^3$. Let $K_g$ denote the Gauss curvature
of the metric $g$.
 We want to show the
following

\begin{theo}\label{estimateof isometric emb} There exists a positive constant
$\e_0>0$ such that for any metric $g$ on  $\mathbf{S}^2$ with
$$
\| K_g-1\|_{C^1} \leq \e_0,
$$
 there exists an isometric embedding
$$
i:(\mathbf{S}^2, g)\rightarrow \mathbb{R}^3
$$
and a conformal transformation $\Psi_1$ of $(\mathbf{S}^2, g_0)$
with
$$\|i\circ \Psi^{-1} _1 -i_0\|_{C^{2,\alpha}}\leq C_0 \| K-1\|_{C^\alpha}, $$
  for some $0<\alpha<1$. Here  $C_0$ is a positive  constant only depending on $\alpha$, and
$\|\cdot\|_{C^{2,\alpha}}$, $\|\cdot\|_{C^{\alpha}}$ are taken with respect to
$g_0$.
\end{theo}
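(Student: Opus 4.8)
The plan is to solve the Weyl embedding problem for $g$ perturbatively around the standard embedding $i_0$, while using the conformal group of $(\mathbf{S}^2,g_0)$ to absorb the gauge degeneracy that prevents $\|K_g-1\|$ from controlling the metric tensor. The point to keep in mind is that $\|K_g-1\|_{C^1}$ being small does \emph{not} force $g$ to be close to $g_0$ as a tensor: if $g=\phi^*g_0$ for a Möbius automorphism $\phi$, then $K_g\equiv1$ while $g$ may be far from $g_0$. Any honest isometric embedding of such a $g$ is $i_0\circ\phi$, which is close to $i_0$ only after the reparametrization $\phi^{-1}$. This is precisely the role of the conformal transformation $\Psi_1$ in the statement, and isolating it is the heart of the argument.

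\textbf{Step 1 (conformal normalization).} By uniformization $(\mathbf{S}^2,g)$ is conformally diffeomorphic to $(\mathbf{S}^2,g_0)$, so I would produce a diffeomorphism $\Psi_1$ and a function $u$ with $(\Psi_1^{-1})^*g=e^{2u}g_0=:g_1$. The map $\Psi_1$ is determined only up to pre-composition with a Möbius automorphism, and I would fix this ambiguity by a balancing (center-of-mass) condition requiring $u$ to be $L^2(g_0)$-orthogonal to the first spherical harmonics $x^1,x^2,x^3$. The factor obeys the Gauss curvature equation $K_{g_1}=e^{-2u}(1-\Delta_{g_0}u)$, whose linearization at $u=0$ is $(-\Delta_{g_0}-2)u=K_{g_1}-1$. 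The operator $-\Delta_{g_0}-2$ has kernel exactly the first harmonics (the eigenvalue-$2$ space of $-\Delta_{g_0}$), so it is invertible precisely on the balanced subspace, and the infinitesimal Möbius freedom (the gradient conformal fields) spans this kernel and is what makes the balancing achievable. Carrying out this solve, and here Nirenberg's a priori estimates enter, should give $\Psi_1$ and $u$ with
$$
\|u\|_{C^{2,\alpha}}\le C\,\|K_g-1\|_{C^\alpha},\qquad\text{hence}\qquad\|g_1-g_0\|_{C^{2,\alpha}}\le C\,\|K_g-1\|_{C^\alpha}.
$$

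\textbf{Step 2 (perturbative embedding).} It then suffices to embed $g_1$, which is $C^{2,\alpha}$-close to $g_0$, controlling the embedding by $\|g_1-g_0\|_{C^{2,\alpha}}$; setting $i:=j\circ\Psi_1$ recovers an isometric embedding of $g$, and $i\circ\Psi_1^{-1}=j$. I would linearize the first-fundamental-form map $\mathcal F(j)=j^*\delta$ at $i_0$: writing a variation as $V=f\nu+W$ with $\nu$ the outward normal and $W$ tangential, and using that the unit sphere has shape operator the identity, one gets
$$
D\mathcal F(i_0)[f\nu+W]=\mathcal L_W g_0-2f\,g_0.
$$
The trace-free part is the conformal Killing operator, which on $\mathbf{S}^2$ is surjective onto trace-free symmetric $2$-tensors (no holomorphic quadratic differentials in genus $0$), while the free function $f$ hits the trace part; thus $D\mathcal F(i_0)$ is surjective, with kernel the $6$-dimensional space of infinitesimal Euclidean motions, which is the classical infinitesimal rigidity of the round sphere. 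After fixing this rigid-motion gauge by a normalization (center of mass at the origin, no infinitesimal rotation), the implicit function theorem in $C^{2,\alpha}$ yields a unique nearby isometric embedding $j$ of $g_1$ with
$$
\|j-i_0\|_{C^{2,\alpha}}\le C\,\|g_1-g_0\|_{C^{2,\alpha}}\le C\,\|K_g-1\|_{C^\alpha},
$$
after possibly shrinking $\e_0$, which is the asserted estimate.

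\textbf{Main obstacle.} The delicate point is Step 1: the balancing must be achievable with the conformal factor controlled \emph{linearly and uniformly} by $\|K_g-1\|_{C^\alpha}$, even though $\Psi_1$ itself may be far from the identity when the conformal structure of $g$ is far from standard. Coupling the choice of Möbius normalization with the solution of the degenerate Gauss curvature equation, and reconciling the pullback behavior of the various Hölder norms under $\Psi_1$, is where the argument is most technical, and is exactly the place where Nirenberg's estimates for the Weyl problem do the essential work.
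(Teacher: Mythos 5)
The central gap is your Step 2. The claim that $D\mathcal F(i_0)$ is surjective, so that the Banach implicit function theorem applies in $C^{2,\alpha}$, is false in the spaces you name, because of the classical loss of derivatives in the Weyl problem. Solve $\mathcal L_W g_0-2f g_0=h$ as you suggest: the trace-free part is the (first-order, elliptic) conformal Killing equation, so $W$ gains one derivative over $h$; but $f$ is then forced algebraically, $f=\tfrac12\,\mathrm{div}\,W-\tfrac14\,\mathrm{tr}_{g_0}h$, which has only the regularity of $h$, never one order better. Concretely, take $h=\phi\,g_0$ pure trace with $\phi\in C^{1,\alpha}\setminus C^{2,\alpha}$: then $W$ must be a (smooth) conformal Killing field and every solution has $f=-\tfrac12\phi+\text{smooth}$, so no solution lies in $C^{2,\alpha}$. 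Hence $D\mathcal F(i_0):C^{2,\alpha}\to C^{1,\alpha}$ is \emph{not} surjective, there is no bounded right inverse from $C^{1,\alpha}$ tensors to $C^{2,\alpha}$ vector fields, and a Newton/Picard iteration loses one derivative per step (the quadratic error $\partial V\cdot\partial V$ costs a derivative that the linear solve does not recover in the normal component). This is exactly the difficulty that makes the openness step of the Weyl problem nontrivial, and it is why the paper does not attempt any such IFT: having established the metric estimate, it simply invokes the perturbation arguments on p.~353 of Nirenberg's paper \cite{Nr} (equivalently one could use the Darboux/Monge--Amp\`ere reformulation, whose linearization is a genuinely elliptic second-order scalar equation, or a Nash--Moser scheme). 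As written, your Step 2 would fail.

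Your Step 1 is, in outline, the paper's own argument (uniformization, Möbius normalization, kernel of $-\Delta_{g_0}-2$ equal to the first harmonics), but it too is missing the ingredient that makes the linear estimate usable. The conformal factor $u$ is not produced by a perturbative solve: it is handed to you by uniformization (note the curvature enters the equation as $K\circ\Psi_1$, composed with the unknown map, so one cannot simply ``prescribe'' the right-hand side), and the linearization only yields a schematic inequality of the form $\|u\|\leq C\|K-1\|+C\|u\|^2$, which is vacuous unless one already knows $u$ is small. The paper closes this loop with two steps you do not supply: Chang's uniform $C^0$ bound for balanced solutions of the Gauss curvature equation, and a compactness/contradiction argument showing $\|u\|_{C^0}\to0$ as $\|K-1\|_{C^0}\to0$ (using that the only balanced solution of $\Delta_{g_0}u+e^{2u}=1$ is $u\equiv0$); only then are the quadratic terms absorbed, the decomposition $u=u_0+u_1+u_2$ estimated, and the bound bootstrapped to $C^{2,\alpha}$. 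A further small discrepancy: the paper's balancing condition is the nonlinear one, $\int_{\mathbf S^2}e^{2u}x_i=0$, which is known to be achievable by a Möbius transformation (Chang's lemma); achievability of your linear condition ($u\perp x_i$ in $L^2$) by a Möbius transformation is not standard and would itself need proof.
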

\begin{proof}
The key point of the proof of above theorem is to show that there is a
conformal transformation $\Psi_1$ of the standard unit sphere with
\begin{equation}\label{perturbmetric}
\|\Psi_1 ^* (g)-g_0 \|_{C^{2,\alpha}} \leq C_1\| K-1\|_{C^\alpha},
\end{equation}
where $C_1$ is a  constant only depending on $\alpha$. Once
(\ref{perturbmetric}) is verified then by the arguments in P353 of
\cite{Nr}, we see that the conclusion of Theorem is true. Due to the
Uniformization Theorem, we see that there is conformal
differmorphism $\Phi$: $(\mathbf{S}^2, g_0)\mapsto (\mathbf{S}^2,
g)$ with $\Phi^* (g) = e^{2u} g_0$ and

\begin{equation}\label{equation1}
\Delta_{\mathbf{S}^2} u + K e^{2u}=1.
\end{equation}
Without loss of generality, we may assume for $1\leq i \leq 3$
\begin{equation}\label{masscenter}
\int_{\mathbf{S}^2}e^{2u} x_i =0,
\end{equation}
where $x_i$, $1\leq i \leq 3$, is the coordinate function of the
standard sphere in $\mathbb{R}^3$, integral is taken on the standard
sphere $(\mathbf{S}^2 , g_0)$. Otherwise, by Lemma 2, part 3 in
\cite{Ch} we can find a conformal transformation $\Phi_1$:
$(\mathbf{S}^2, g_0)\mapsto(\mathbf{S}^2, g_0)$ so that
$u\circ\Phi_1$ satisfying (\ref{equation1}), and (\ref{masscenter}).
Due to $a'$ in the proof of Theorem 1, Part 7 in \cite{Ch}, we know
that there a   constant $C$ only depending on $w$ and $W$ (here
$0<w\leq K \leq W$)  and with
\begin{equation}\label{compactness}
|u|_{C^0(\mathbf{S}^2)} \leq C.
\end{equation}
We now claim that for any $\eta>0$, there is $\delta=\delta(\eta)$
so that
\begin{equation}\label{small}
|u|_{C^0 (\mathbf{S}^2)}\leq \eta,
\end{equation}
provided $|K-1|_{C^0 (\mathbf{S}^2)}\leq \delta$.

Suppose the claim  fails, then we may find a constant $\eta_0
>0$, and a  sequence of $K_j$ and $u_j$ satisfying
(\ref{equation1}) and (\ref{masscenter}), and $|K_j -1|$ tends to
zero while $|u_j|_{C^0 (\mathbf{S}^2)}\geq \eta_0$. By
(\ref{compactness}) and the standard estimates in elliptic PDE,  we
may take a subsequence still denoted by $u_j$ which converges to
$u_\infty$ in the sense of $C^1(\mathbf{S}^2)$, and $u_\infty$
satisfying (\ref{masscenter}) and
$$
\Delta_{\mathbf{S}^2} u_\infty + e^{2u_\infty}=1.
$$
By this we see that $u_\infty =0$ which is contradiction to the
choice of $u_j$. Thus,  claim is true.

Let
$$u=u_0 + u_1 +u_2$$
with
$$
u_0=\frac{1}{4\pi}\int_{\mathbf{S}^2} u, \quad
u_1 =a_1 x_1 +a_2 x_2 +a_3 x_3, $$
where
$$
a_i =\frac{3}{4\pi} \int_{\mathbf{S}^2} u \cdot x_i
$$
for $1\leq 1\leq 3$.
Taking integral on (\ref{equation1}) on $(\mathbf{S}^2 , g_0)$, we
get
$$
\int_{\mathbf{S}^2}  K e^{2u} =4\pi.
$$
Together this with the assumption of $K$, we get
\begin{equation}\label{u0}
|u_0|\leq C (\|K-1\|_{C^0 (\mathbf{S}^2)}+\|u\|^2 _{L^2
(\mathbf{S}^2)}).
\end{equation}
Here  and in the sequel, $C$ is  always a  constant
independent of $u$.
Let
$$
L(u)%\overset{\bigtriangleup}{=}
:=\Delta_{\mathbf{S}^2} u+ 2u,
$$
By equation (\ref{equation1}) and the definition of $u_2$,
we get
$$
L(u_2) = (1-K)e^{2u}+(1+2u -e^{2u})-2u_0,
$$
from which we have
\begin{equation}\label{estimate1}
\begin{split}
\int_{\mathbf{S}^2}( |\nabla u_2|^2 -2|u_2|^2 )\leq & C
(\|K-1\|_{C^0
(\mathbf{S}^2)}\|u_2\| _{L^2 (\mathbf{S}^2)}\\
& +\|1+2u -e^{2u}\|_{L^2 (\mathbf{S}^2)}\|u_2\| _{L^2
(\mathbf{S}^2)}\\
&+ |u_0|\|u_2\| _{L^2 (\mathbf{S}^2)}),
\end{split}
\end{equation}

From the
definition of $u_2$ we see that
\begin{equation}\label{estimate2}
\int_{\mathbf{S}^2}( |\nabla u_2|^2 -2|u_2|^2 )\geq C \|u_2 \|^2
_{L^2 (\mathbf{S}^2)}
\end{equation}
where $C$ is a positive universal constant  independent of $u$.
By (\ref{small}) we get
\begin{equation}\label{estimate3}
\|1+2u -e^{2u}\|_{L^2 (\mathbf{S}^2)} \leq C \|u\|^2 _{L^2
(\mathbf{S}^2)}
\end{equation}
Combining (\ref{estimate1}), (\ref{estimate2}), (\ref{estimate3})
and (\ref{u0}) we obtain
\begin{equation}\label{u2}
\|u_2 \| _{L^2 (\mathbf{S}^2)}\leq C (\|K-1\|_{C^0
(\mathbf{S}^2)}+\|u\|^2 _{L^2 (\mathbf{S}^2)})
\end{equation}
On the other hand, by the (\ref{masscenter}) and direct
computations we obtain, for each $i$,
$$
|a_i| \leq C \|u\|^2 _{L^2 (\mathbf{S}^2)},
$$
 which  implies
$$
\|u_1\| _{L^2 (\mathbf{S}^2)} \leq C  \|u\|^2 _{L^2
(\mathbf{S}^2)}.
$$

All the constants $C$ above are independent of $u$. Putting this
estimate with (\ref{u0}), (\ref{u2}), and (\ref{small}), we get
\begin{equation}\label{u}
\|u\| _{L^2 (\mathbf{S}^2)} \leq C \|K-1\|_{C^0 (\mathbf{S}^2)}.
\end{equation}
Together this with (\ref{equation1}), we get
$$
\|u \|_{W^{2,2}(\mathbf{S}^2)}\leq C \|K-1\|_{C^0 (\mathbf{S}^2)},
$$
which implies
$$
\|u \|_{C^\alpha(\mathbf{S}^2)}\leq C \|K-1\|_{C^0
(\mathbf{S}^2)},
$$
for some $\alpha>0$, here $C$ is a constant that only depends on
$\alpha$ and is  independent of $u$. Then by the Schauder theory in
partial differential equations, we get
$$
\|u \|_{C^{2,\alpha}(\mathbf{S}^2)}\leq C \|K-1\|_{C^0
(\mathbf{S}^2)},
$$
Thus, we see that
$$
\|\Psi^* _1 (g)-g_0 \|_{C^{2,\alpha}} \leq C_1\| K-1\|_{C^\alpha},
$$
where $C_1$ is a constant  depending only on $\alpha$. It
implies the conclusion of the Theorem is true. Thus, we finish to
prove the Theorem.
\end{proof}

Let $X$,  $n$ be the position vector and  the outward unit normal
vector of $i (\mathbf{S}^2)$ in $\mathbb{R}^3$ respectively,
$H_0$ be its mean curvature with respect to $n$, then  as a
corollary, we have
\begin{coro}\label{estimate2of isometric emb}
Let $(\mathbf{S}^2, g)$ be a two dimensional Riemannian manifold,
$K$ be its Gauss curvature. Then there exist a positive constant
$\e_0$ which is independent of $g$ such that if
$$
\| K-1\|_{C^1} \leq \e_0,
$$
then
$$|X\cdot n-1|\leq C \|K-1\|_{C^0
(\mathbf{S}^2)}, $$
and
$$
|H_0 -2|\leq C \|K-1\|_{C^0 (\mathbf{S}^2)},
$$
where $C$ is a constant that independent of $g$.
\end{coro}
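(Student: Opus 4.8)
The plan is to derive both estimates directly from Theorem \ref{estimateof isometric emb}, using the elementary fact that $X\cdot n$ and $H_0$ are pointwise \emph{geometric} quantities of the image surface and depend smoothly on, respectively, the $1$-jet and the $2$-jet of the embedding map. First I would apply the Theorem: for $\e_0$ small enough there is an isometric embedding $i$ and a conformal transformation $\Psi_1$ with
\[
\|\,i\circ\Psi_1^{-1}-i_0\,\|_{C^{2,\alpha}}\le C_0\|K-1\|_{C^{\alpha}}.
\]
Set $\tilde i:=i\circ\Psi_1^{-1}$. Since $\Psi_1^{-1}$ is a diffeomorphism of $\mathbf S^2$, the images $\tilde i(\mathbf S^2)$ and $i(\mathbf S^2)$ coincide, and $X\cdot n$ and $H_0$ are invariant under reparametrization; hence it suffices to bound them for $\tilde i$, which is $C^{2,\alpha}$-close (in particular $C^2$-close) to the standard embedding $i_0$. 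Note also that $C^0$-closeness of $\tilde i$ to $i_0$ makes the centering of the surface automatic, so no separate mass-center argument is needed beyond what the Theorem provides.

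Next I would record the standard values. For the unit round sphere $i_0(\mathbf S^2)$ one has $X_0=n_0$ at every point, so $X_0\cdot n_0\equiv1$, and both principal curvatures equal $1$, so the standard mean curvature is $\equiv2$ (consistent with the normalization $H=2/r$ used in Lemma \ref{decay2ndforms}). In local coordinates the outward unit normal of an embedding $f$ is $n(f)=f_u\times f_v/|f_u\times f_v|$, and the mean curvature $H_0(f)$ is the trace of the second fundamental form in the induced metric; both are smooth functions of the first, respectively first and second, derivatives of $f$ on the open set where $|f_u\times f_v|\neq0$. Because $\tilde i$ is $C^2$-close to $i_0$, its $1$- and $2$-jets lie in a fixed neighborhood of those of $i_0$ on which these functions are Lipschitz, so that
\[
\|X\cdot n-1\|_{C^0}\le C\|\tilde i-i_0\|_{C^1},\qquad
\|H_0-2\|_{C^0}\le C\|\tilde i-i_0\|_{C^2},
\]
with $C$ depending only on $i_0$ and $\e_0$, hence independent of $g$. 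Combining with the estimate for $\tilde i-i_0$ then yields the two asserted inequalities.

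The point requiring most care will be orientation and nondegeneracy: I must use the smallness of $\e_0$ to keep $\tilde i$ in the region where $|f_u\times f_v|$ is bounded away from $0$ and where proximity to $n_0$ selects the genuinely outward normal (so that $n(\tilde i)$ is $C^0$-close to $n_0$ and not to $-n_0$), which is precisely what makes the Lipschitz constants above uniform. The remaining issue is bookkeeping of the norm on the right-hand side: the statement of the Theorem bounds $\tilde i-i_0$ by $\|K-1\|_{C^\alpha}$, whereas the Corollary asserts a bound by $\|K-1\|_{C^0}$. This is harmless, since the proof of the Theorem in fact establishes the sharper chain $\|u\|_{C^{2,\alpha}}\le C\|K-1\|_{C^0}$, which I would invoke to transfer the $C^{2,\alpha}$-closeness of $\tilde i$ to $i_0$ into a bound governed by $\|K-1\|_{C^0}$, giving the final estimates exactly as stated.
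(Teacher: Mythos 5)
Your proposal follows essentially the same route as the paper: the paper's proof consists precisely of applying Theorem \ref{estimateof isometric emb} and then observing that the support function $X\cdot n$ and the mean curvature $H_0$ are independent of the parametrization of the domain manifold, so the estimates for $i\circ\Psi_1^{-1}$ transfer to $i(\mathbf{S}^2)$. The extra details you supply --- the Lipschitz dependence of $n$ and $H_0$ on the $1$- and $2$-jets of the embedding near $i_0$, the orientation/nondegeneracy remark, and the resolution of the $C^\alpha$ versus $C^0$ bookkeeping via the chain $\|u\|_{C^{2,\alpha}}\leq C\|K-1\|_{C^0}$ established inside the proof of the Theorem --- are exactly the steps the paper leaves implicit.
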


\begin{proof}

By Theorem \ref{estimateof isometric emb}, we see that the statement
of the theorem is true for $i\circ \Psi^{-1}_1 (\mathbf{S}^2)$ in
$\mathbb{R}^3$. It is also true for the case of $i (\mathbf{S}^2)$,
for the support function $X\cdot n$ and the mean curvature $H_0$ are
independent of a parametrization of the domain manifold.
\end{proof}

On the other hand, by Corollary \ref{decay2ndformsinR3} and the
similar arguments as in Prop 2.1 in \cite{HY}, we may prove the
following

\begin{prop}\label{nearround}
Let $\hat \lambda_i$ be the principal curvature of $\Si$ in
$(M\setminus K, \hat g)$, If $|\AA|=O(r^{-1-\tau})$, $|\bar\nabla
\AA|=O(r^{-2-\tau})$, then there is a number $r_0 \in \R$ and a
vector $\overrightarrow{a} \in \R^3$ such that
$$
\hat \lambda_i -r^{-1}_0 =O(r^{-1-\tau}),
$$
$$
|(y-\overrightarrow{a})-r_0 \hat n|=O(r^{1-\tau}).
$$
Here $y$ is the position vector of $\Si$ in $(M\setminus K,\hat
g)$ (which is regarded as a subdomain  of $\R ^3$), $\hat n$ is
the outward unit normal vector of $\Si_r$.
\end{prop}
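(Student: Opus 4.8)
The plan is to show that the hypotheses (which are precisely the conclusions of Corollary \ref{decay2ndformsinR3}, so $\overset{\circ}{\hat A}$ here denotes the trace free part of the Euclidean second fundamental form $\hat A$) force $\Si$ to be, up to errors of the stated orders, a round sphere of radius $r_0$ centered at a point $\overrightarrow a$. I would first establish that the mean curvature $\hat H$ of $\Si$ in $(M\setminus K,\hat g)$ is asymptotically constant, which yields the principal curvature estimate $\hat\lambda_i-r_0^{-1}=O(r^{-1-\tau})$; then, using this, I would show that the $\R^3$-valued map $y-r_0\hat n$ is asymptotically constant, which is exactly the second estimate.

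For the first part, recall that in the flat ambient $\R^3$ the Codazzi equation reduces to $\bar D_i\hat A_{jk}=\bar D_j\hat A_{ik}$, so $\bar D\hat A$ is totally symmetric. Writing $\hat A_{ij}=\overset{\circ}{\hat A}_{ij}+\tfrac{\hat H}{2}\bar g_{ij}$ and contracting the Codazzi identity gives
$$
\tfrac12\,\bar D_j\hat H=\bar D^i\overset{\circ}{\hat A}_{ij},
$$
so $|\bar D\hat H|\le C|\bar D\overset{\circ}{\hat A}|=O(r^{-2-\tau})$ by hypothesis. Since $\diam(\Si)\le Cr$, integrating this bound along a minimizing path joining any $x\in\Si$ to a fixed $x_0\in\Si$ gives $|\hat H(x)-\hat H(x_0)|\le \diam(\Si)\sup_{\Si}|\bar D\hat H|=O(r^{-1-\tau})$; hence $\hat H=\hat H_0+O(r^{-1-\tau})$ with the constant $\hat H_0:=\hat H(x_0)$. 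Setting $r_0:=2/\hat H_0$, the eigenvalue bound $|\hat\lambda_i-\tfrac{\hat H}{2}|\le|\overset{\circ}{\hat A}|=O(r^{-1-\tau})$ then gives $\hat\lambda_i=r_0^{-1}+O(r^{-1-\tau})$. To see $C^{-1}r\le r_0\le Cr$ I would argue exactly as in the claim inside the proof of Lemma \ref{decay2ndforms}: the Gauss equation gives $\hat K=\hat\lambda_1\hat\lambda_2=r_0^{-2}+O(r_0^{-1}r^{-1-\tau})$, and Gauss--Bonnet together with the area estimate $\Lambda^{-1}r^2\le\Area(\Si)\le\Lambda r^2$ of Lemma \ref{estimate of area} forces $r_0$ to be comparable to $r$.

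For the second part, regard $y$ and $\hat n$ as $\R^3$-valued functions on $\Si$ and set $W:=y-r_0\hat n$. For a unit tangent $e_i$ in a principal frame, the flat connection $D$ of $\R^3$ satisfies $D_{e_i}y=e_i$ and the Weingarten relation $D_{e_i}\hat n=\hat\lambda_i e_i$, whence
$$
D_{e_i}W=(1-r_0\hat\lambda_i)\,e_i .
$$
Using $r_0\asymp r$ and $\hat\lambda_i-r_0^{-1}=O(r^{-1-\tau})$, we get $|1-r_0\hat\lambda_i|=r_0\,|\hat\lambda_i-r_0^{-1}|=O(r^{-\tau})$, so $|DW|=O(r^{-\tau})$ on $\Si$. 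Integrating componentwise along a path of length at most $\diam(\Si)\le Cr$ from $x_0$ to $x$ yields $|W(x)-W(x_0)|=O(r^{1-\tau})$; taking $\overrightarrow a:=W(x_0)=y(x_0)-r_0\hat n(x_0)$ gives $|(y-\overrightarrow a)-r_0\hat n|=O(r^{1-\tau})$, as claimed.

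The main obstacle I anticipate is the first part, namely promoting the pointwise smallness of $\bar D\overset{\circ}{\hat A}$ to the asymptotic constancy of $\hat H$ and correctly pinning down the scale $r_0\asymp r$; this is where the Codazzi identity, the diameter bound, Gauss--Bonnet, and the area estimate all have to be combined consistently with respect to the Euclidean metric $\hat g$. Once $\hat\lambda_i=r_0^{-1}+O(r^{-1-\tau})$ with $r_0\asymp r$ is in hand, the position-vector estimate is a clean one-step integration of the Weingarten relation $D_{e_i}(y-r_0\hat n)=(1-r_0\hat\lambda_i)e_i$.
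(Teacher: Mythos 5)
Your proposal is correct and is essentially the paper's own route: the paper gives no detailed proof here but defers to Proposition 2.1 of \cite{HY}, whose standard argument is precisely your scheme --- the contracted Codazzi identity $\bar D^i\overset{\circ}{\hat A}_{ij}=\tfrac12\bar D_j\hat H$ giving $\hat H$ nearly constant over the diameter, followed by integration of $D_{e_i}(y-r_0\hat n)=(1-r_0\hat\lambda_i)e_i$. The comparability $C^{-1}r\le r_0\le Cr$, which you fold into your argument, is exactly what the paper establishes immediately after the proposition via Gauss--Bonnet and Lemma \ref{estimate of area}.
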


By the Gauss-Bonnet formula, Lemma \ref{estimate of area} and the
arguments we used before, we know that there is a  constant $C>0$
which is independent of $r$  and $r_0$ and  with $C^{-1} r \leq r_0
\leq C r$.

Now, we are in the position to  study the isometric embedding of
nearly round surfaces. Let $(M,g)$ be the AF manifolds and $\Si_r$
the nearly round surface in $(M,g)$ as $r$ goes to infinity. Then we
have

\begin{theo}\label{estimate H0}
Let $\Si_r\subset (M, g)$ be a nearly round  surfaces in $(M,g)$ as
$r$ goes to infinity,  and $r_0$  defined as in Proposition
\ref{nearround}. Then there is isometrically embedding $X_{r}$ of
$\Si_r$ into $\mathbb{R}^3$ such that
\begin{equation}\label{suppfun}
X_{r}\cdot n_0 =r_0 +O(r_0 ^{1-\tau})
\end{equation}
and
\begin{equation}\label{meancurvexp}
\|H_0  -\frac{2}{r_0}\|_{C^0}\leq C_3 r_0 ^{-1-\tau },
\end{equation}
provided that $r_0$ is  large enough. Here $n_0$ and $H_0 $ are the
unit outward normal vector and mean curvature of $X_r (\Si_r)$ in
$\mathbb{R}^3$ respectively. $C_3$ is a constant that is independent
of $r$.
\end{theo}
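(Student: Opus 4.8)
The plan is to reduce to the near-unit-sphere situation already handled in Corollary \ref{estimate2of isometric emb} by a rescaling argument. Since $\Si_r$ is a topological sphere, I identify it with $\mathbf{S}^2$ and equip it with the rescaled metric $\tilde g := r_0^{-2}\, g|_{\Si_r}$, where $r_0$ is the radius furnished by Proposition \ref{nearround}. Under this constant rescaling the Gauss curvature transforms as $K_{\tilde g}=r_0^2 K$, where $K$ is the Gauss curvature of the metric induced from $g$. The heart of the matter is to show $\|K_{\tilde g}-1\|_{C^1(\tilde g)}\to 0$ as $r\to\infty$, so that for $r$ large the hypothesis $\|K_{\tilde g}-1\|_{C^1}\le\e_0$ of Corollary \ref{estimate2of isometric emb} is met; then I apply that corollary to $(\Si_r,\tilde g)$ and scale the resulting embedding back up by $r_0$.

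For the curvature estimate I use the Gauss equation $K=\det A + (\text{ambient curvature})$ together with the decay bounds collected earlier. Writing $A=\overset{\circ}{A}+\tfrac H2\bar g$, Lemma \ref{decay2ndforms} gives $|A|\le Cr^{-1}$ and, from its proof, $H=\tfrac2{r_1}+O(r^{-1-\tau})$ with $C^{-1}r\le r_1\le Cr$; the trace-free part obeys $|\overset{\circ}{A}|\le Cr^{-1-\tau}$ by the definition of nearly round surfaces, and the ambient sectional curvature of an AF manifold is $O(r^{-2-\tau})$. Hence $K=r_1^{-2}+O(r^{-2-\tau})$. Moreover, $|A-\hat A|\le Cr^{-1-\tau}$ (Lemma \ref{relation2ndform}) gives $H=\hat H+O(r^{-1-\tau})$, while Proposition \ref{nearround} gives $\hat H=2r_0^{-1}+O(r^{-1-\tau})$; comparing these and using $r_0,r_1\sim r$ yields $r_0-r_1=O(r^{1-\tau})$, whence
\[
K=\frac{1}{r_0^2}+O(r^{-2-\tau}),\qquad K_{\tilde g}-1=O(r^{-\tau}).
\]
For the first derivative I differentiate the Gauss equation: the constant leading term drops out, and $|\nabla K|\le C\bigl(|A|\,|\nabla A|+|\nabla(\text{ambient})|\bigr)\le Cr^{-3-\tau}$ by the bound $|\nabla A|\le Cr^{-2-\tau}$ from Lemma \ref{decay2ndforms}. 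Since $|\nabla_{\tilde g}f|_{\tilde g}=r_0|\nabla_g f|_g$ for $\tilde g=r_0^{-2}g$ and $K_{\tilde g}=r_0^2K$, this gives $|\nabla_{\tilde g}K_{\tilde g}|_{\tilde g}=r_0^3|\nabla_g K|_g\le Cr^{-\tau}$. Thus $\|K_{\tilde g}-1\|_{C^1}\le Cr^{-\tau}\le\e_0$ once $r$ is large.

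With the hypothesis verified, Corollary \ref{estimate2of isometric emb} produces an isometric embedding $\tilde X\colon(\Si_r,\tilde g)\to\mathbb{R}^3$, with unit normal $\tilde n$ and mean curvature $\tilde H_0$, satisfying $|\tilde X\cdot\tilde n-1|\le C\|K_{\tilde g}-1\|_{C^0}\le Cr^{-\tau}$ and $|\tilde H_0-2|\le Cr^{-\tau}$ (the origin in $\mathbb{R}^3$ being the one inherited from the normalization there). Define $X_r:=r_0\,\tilde X$. Scaling the embedding by $r_0$ multiplies the induced metric by $r_0^2$, so $X_r$ is an isometric embedding of $(\Si_r,g)$; the unit normal is unchanged, $n_0=\tilde n$, the position vector scales by $r_0$, and the mean curvature by $r_0^{-1}$. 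Consequently
\[
X_r\cdot n_0=r_0\,(\tilde X\cdot\tilde n)=r_0+O(r_0^{1-\tau}),
\qquad
H_0=\frac{\tilde H_0}{r_0}=\frac{2}{r_0}+O(r_0^{-1-\tau}),
\]
using $r_0\sim r$ to convert the error terms, which are exactly \eqref{suppfun} and \eqref{meancurvexp}.

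The main obstacle is the $C^1$ curvature estimate for the rescaled metric, and within it the clean identification of the leading constant as $r_0^{-2}$: one must reconcile the effective radius $r_1$ coming from the mean curvature in Lemma \ref{decay2ndforms} with the radius $r_0$ coming from the principal curvatures in Proposition \ref{nearround}, showing they agree to within $O(r^{1-\tau})$, so that after multiplying $K$ by $r_0^2$ only an $O(r^{-\tau})$ error survives rather than an $O(1)$ one. Once the two radii are reconciled and the gradient bound $|\nabla A|\le Cr^{-2-\tau}$ is in hand, the remainder is the routine bookkeeping of how the support function and mean curvature scale under $X_r=r_0\tilde X$.
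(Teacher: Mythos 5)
Your proposal is correct and follows essentially the same route as the paper: establish the expansion $K=\frac{1}{r_0^2}+O(r^{-2-\tau})$ together with the gradient bound $|\nabla K|\leq C r^{-3-\tau}$, rescale by $r_0^{-2}$ so that Theorem \ref{estimateof isometric emb} (via Corollary \ref{estimate2of isometric emb}) applies, and scale the resulting embedding back by $r_0$. The paper compresses all of this into three lines, so your added details --- in particular reconciling the mean-curvature radius $r_1$ of Lemma \ref{decay2ndforms} with the $r_0$ of Proposition \ref{nearround}, and the scaling bookkeeping for $X_r\cdot n_0$ and $H_0$ --- are exactly the ``direct computations'' the paper leaves implicit.
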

\begin{proof}
By the assumption on $A$ and $\bar\nabla A$, we see that
$$K=\frac{1}{r_0 ^2}+ O(r ^{-2-\tau})$$
and
$$
\|\bar\nabla K\|_{C^0}\leq C r ^{-3-\tau}.
$$
Then by a rescaling, we see that the resulting surface satisfying
the assumptions of Theorem \ref{estimateof isometric emb}. Then
combining Theorem \ref{estimateof isometric emb} with   direct
computations we get the conclusion.
\end{proof}

\section{Brown-York mass and Hawking  mass of nearly  round surfaces at infinity}

In this section, we prove our main results.
\begin{theo}\label{BYmass}
Let $(M,g)$ be an AF manifold, $\Si_r$ be a nearly  round surface of
$(M, g)$  as $r$ goes to infinity, then
$$
\lim_{r\rightarrow \infty}m_{BY}(\Si_r)=m_{ADM}(M).
$$
\end{theo}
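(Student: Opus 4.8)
The plan is to split the Brown--York integrand as $H_0-H=(\hat H-H)+(H_0-\hat H)$, so that
\[
8\pi\,m_{BY}(\Si_r)=\int_{\Si_r}(\hat H-H)\,d\sigma+\int_{\Si_r}(H_0-\hat H)\,d\sigma ,
\]
and to show that the first integral produces the ADM flux plus a $\sigma$--dependent remainder while the second integral produces exactly the opposite remainder, so that only the ADM term survives. Throughout I would use that all three mean curvatures agree to leading order, $H,\hat H,H_0=\tfrac{2}{r_0}+O(r^{-1-\tau})$, together with $\Area(\Si_r)\sim r^2$ (Lemma \ref{estimate of area}), so that any integrand of size $O(r^{-1-2\tau})$ integrates to $o(1)$.

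For the first integral I would invoke the identity (\ref{integral of diff meancurv}) already derived from Lemma \ref{difference of mean curvature}. Since $\tfrac{\p\rho}{\p x^j}$ is precisely the Euclidean outward unit normal $\nu^j$ of $\Si_r$, the term $\tfrac12\int_{\Si_r}(g_{ij,i}-g_{ii,j})\nu^j\,d\sigma^0$ is half the ADM flux; because $\Si_r$ is a connected exhausting boundary with area growth $r^2$, Bartnik's coordinate--independence of this flux gives $\int_{\Si_r}(g_{ij,i}-g_{ii,j})\nu^j\,d\sigma^0\to 16\pi\,m_{ADM}(M)$, contributing $8\pi\,m_{ADM}(M)$, and the $O(r^{1-2\tau})$ error vanishes as $\tau>\tfrac12$. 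The leftover $\tfrac12\int_{\Si_r}\sigma_{st}\tfrac{\p^2\rho}{\p x^s\p x^t}\,d\sigma^0$ I would reduce with Lemma \ref{2ndfundamentalforms}, writing $\tfrac{\p^2\rho}{\p x^s\p x^t}=B_{st}+\tfrac{\hat H}{2}h_{st}$: the $B_{st}=\overset{\circ}{\hat A}$--part contracts against $\sigma$ to something of size $O(r^{-1-2\tau})$, hence $o(1)$ by Corollary \ref{decay2ndformsinR3}, leaving exactly $\tfrac14\int_{\Si_r}\hat H\,\tr_{\Si}\sigma\,d\sigma^0+o(1)$, where $\tr_{\Si}\sigma=\hat{\bar g}^{ij}\sigma_{ij}$ is the tangential trace of $\sigma$ along $\Si_r$.

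For the second integral, the heart of the matter, I would apply the Minkowski formula $\int_{\Si_r}H_0\,d\sigma=2\int_{\Si_r}K\,(X_r\cdot n_0)\,d\sigma$ to the isometric image $X_r(\Si_r)\subset\R^3$, and the analogous formula for $\Si_r$ viewed in $(\R^3,\hat g)$, feeding in the embedding estimates $X_r\cdot n_0=r_0+O(r_0^{1-\tau})$ and $K=r_0^{-2}+O(r^{-2-\tau})$ from Theorem \ref{estimate H0} and Proposition \ref{nearround}. The essential computation is the first--order comparison of the isometrically embedded surface with the Euclidean surface $\Si_r$ in the perturbation $\sigma|_{\Si_r}$ of the induced metric; equivalently, this is the linearization of the total mean curvature functional $\gamma\mapsto\int H_0[\gamma]\,dA_\gamma$ along $\gamma(t)=\hat{\bar g}+t\,\sigma|_{\Si_r}$, whose derivative at $t=0$ is $\tfrac12\int_{\Si_r}(\hat H\,\hat{\bar g}^{ij}-\hat A^{ij})\sigma_{ij}\,d\sigma^0$. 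Discarding the $\overset{\circ}{\hat A}$--contraction (again $o(1)$ by Corollary \ref{decay2ndformsinR3}) and accounting for the area--element change $d\sigma=(1+\tfrac12\tr_{\Si}\sigma+O(r^{-2\tau}))\,d\sigma^0$, this yields $\int_{\Si_r}(H_0-\hat H)\,d\sigma=-\tfrac14\int_{\Si_r}\hat H\,\tr_{\Si}\sigma\,d\sigma^0+o(1)$, which cancels the leftover from the first integral. Adding the two contributions gives $8\pi\,m_{BY}(\Si_r)=8\pi\,m_{ADM}(M)+o(1)$.

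The hard part will be this last comparison. Each of $\int_{\Si_r}H_0\,d\sigma$ and $\int_{\Si_r}\hat H\,d\sigma^0$ is individually only controlled to $O(r^{1-\tau})$, which does not tend to zero when $\tau<1$, so a crude use of the Minkowski formula with the support--function estimate is insufficient; one must show that the non-vanishing $O(r^{1-\tau})$ parts of the two total mean curvatures differ by exactly $\tfrac14\int_{\Si_r}\hat H\,\tr_{\Si}\sigma\,d\sigma^0$. This is the one place where the full precision of the Section 3 isometric embedding estimates --- and the rigidity underlying Nirenberg's theorem, which makes $H_0$ a genuine functional of the induced metric --- is indispensable, rather than the pointwise decay bounds alone.
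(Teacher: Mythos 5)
Your decomposition $H_0-H=(\hat H-H)+(H_0-\hat H)$, the treatment of the first integral via \eqref{integral of diff meancurv}, Lemma \ref{2ndfundamentalforms} and Corollary \ref{decay2ndformsinR3}, and the exact cancellation you predict are precisely the structure of the paper's proof; indeed your target identity $\int_{\Si_r}(H_0-\hat H)\,d\sigma=-\frac14\int_{\Si_r}\hat H\,h^{st}\sigma_{st}\,d\sigma^0+o(1)$ is equivalent to what the paper establishes. The genuine gap is that you do not prove it. You reduce it to the first variation of the Weyl functional $\gamma\mapsto\int H_0[\gamma]\,dA_\gamma$ in the direction $\sigma|_{\Si_r}$; the variation formula you quote is correct, but to use it you must control the \emph{quadratic remainder}: after rescaling to unit size the perturbation is $O(r^{-\tau})$, so the remainder must be shown to be $O(r^{-2\tau})$ (hence $O(r^{1-2\tau})$ after scaling back), and nothing in Section 3 gives this. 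Theorem \ref{estimateof isometric emb} is only a first-order (Lipschitz) bound $\|i\circ\Psi_1^{-1}-i_0\|_{C^{2,\alpha}}\le C_0\|K-1\|_{C^\alpha}$, not a differentiability statement with second-order control; and a mean-value argument along $\gamma_t=\hat{\bar g}+t\,\sigma|_{\Si_r}$ would need the embedding estimates (hence $C^1$ smallness of the Gauss curvature $K_t$) for every interpolated metric, which involves three tangential derivatives of $\sigma$ — not available from the decay \eqref{daf2}; for the endpoint metrics the paper controls $K$ and $\nabla K$ geometrically through $A$, $\nabla A$, $\hat A$, $\bar D\hat A$, a route that does not exist for the abstract metrics $\gamma_t$. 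So the step you yourself call the heart of the matter is left as an assertion whose proof would require substantial new analysis.

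The paper closes exactly this hole by a more elementary device, which you should adopt: apply \emph{both} Minkowski formulas, $\int H_0\,d\sigma=2\int K\,X_r\cdot n_0\,d\sigma$ and $2\Area(\Si_r)=\int H_0\,X_r\cdot n_0\,d\sigma$, to the Weyl image. In each, the support-function integral is evaluated exactly by the divergence theorem, $\int_{\Si_r}X_r\cdot n_0\,d\sigma=3V(r)$, so the uncontrolled $O(r^{1-\tau})$ oscillation of $X_r\cdot n_0$ is only ever multiplied by the small quantities $K-r_0^{-2}=O(r^{-2-\tau})$ and $H_0-2r_0^{-1}=O(r^{-1-\tau})$; adding \eqref{BY-ADMH01} and \eqref{BY-ADMH03} cancels $V(r)$, and Gauss--Bonnet then gives
\begin{equation*}
\int_{\Si_r}H_0\,d\sigma=4\pi r_0+\frac{\Area(\Si_r)}{r_0}+O(r^{1-2\tau}),
\end{equation*}
with the same statement for $(\Si_r,\hat g)$ via Proposition \ref{nearround}. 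Subtracting, the difference of the two total mean curvatures reduces to $(\Area(\Si_r)-\Area_0(\Si_r))/r_0=\frac1{2r_0}\int h^{st}\sigma_{st}\,d\sigma^0+O(r^{1-2\tau})$, which is exactly your cancellation term. Note that this contradicts your closing remark: the pointwise decay estimates of Section 3, combined with exact integral identities, \emph{are} sufficient, and no differentiability or rigidity of the Weyl functional beyond the well-definedness of $H_0$ is needed.
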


\begin{theo}\label{Hawkingmass}
Let $(M,g)$ be an AF manifold, $\Si_r$ be a nearly  round surface of
$(M, g)$ as $r$ goes to infinity, then
$$
\lim_{r\rightarrow \infty}m_{H}(\Si_r)=m_{ADM}(M).
$$
\end{theo}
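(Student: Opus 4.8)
The plan is to reduce Theorem \ref{Hawkingmass} to the already established Brown--York limit (Theorem \ref{BYmass}) by comparing $\int_{\Si_r}H^2\,d\sigma$ with the same integral computed for the isometric embedding $X_r$ of $\Si_r$ into $\R^3$ furnished by Theorem \ref{estimate H0}. Denote by $A_0$, $H_0$ and $\overset{\circ}{A_0}$ the second fundamental form, the mean curvature and the trace free part of $A_0$ of $X_r(\Si_r)\subset\R^3$, and by $\lambda_1,\lambda_2$ its principal curvatures. Since $X_r(\Si_r)$ is a topological sphere in flat $\R^3$, Gauss--Bonnet gives $\int_{\Si_r}K\,d\sigma=4\pi$, and from $\lambda_1\lambda_2=K$ together with $(\lambda_1-\lambda_2)^2=H_0^2-4K$ and $(\lambda_1-\lambda_2)^2=2|\overset{\circ}{A_0}|^2$ one obtains the purely Euclidean identity
\[
16\pi-\int_{\Si_r}H_0^2\,d\sigma=-\int_{\Si_r}(\lambda_1-\lambda_2)^2\,d\sigma=-2\int_{\Si_r}|\overset{\circ}{A_0}|^2\,d\sigma .
\]
Feeding $\|K-1\|_{C^\alpha}=O(r^{-\tau})$ (after rescaling $\Si_r$ to near unit size) into the $C^{2,\alpha}$ embedding estimate of Theorem \ref{estimateof isometric emb}, which underlies Theorem \ref{estimate H0}, yields $|\overset{\circ}{A_0}|\le C r^{-1-\tau}$; combined with $\Area(\Si_r)\le\Lambda r^2$ from Lemma \ref{estimate of area} this gives $16\pi-\int_{\Si_r}H_0^2\,d\sigma=O(r^{-2\tau})=o(r^{-1})$, where the last step uses $\tau>\tfrac12$.

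Next I compare $H$ with $H_0$. Writing $\int_{\Si_r}H_0^2-\int_{\Si_r}H^2=\int_{\Si_r}(H_0-H)(H_0+H)\,d\sigma$, I use the pointwise expansion $H=\tfrac{2}{r_1}+O(r^{-1-\tau})$ from the proof of Lemma \ref{decay2ndforms} and $H_0=\tfrac{2}{r_0}+O(r^{-1-\tau})$ from (\ref{meancurvexp}). Because the intrinsic Gauss curvature equals $r_1^{-2}+O(r^{-2-\tau})$ on the manifold side and $r_0^{-2}+O(r^{-2-\tau})$ on the embedding side, one has $r_0^{-1}-r_1^{-1}=O(r^{-1-\tau})$, hence $H_0+H=\tfrac{4}{r_0}+O(r^{-1-\tau})$ and $H_0-H=O(r^{-1-\tau})$ pointwise. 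Therefore
\[
\int_{\Si_r}(H_0-H)(H_0+H)\,d\sigma=\frac{4}{r_0}\int_{\Si_r}(H_0-H)\,d\sigma+O(r^{-1-\tau})\int_{\Si_r}|H_0-H|\,d\sigma=\frac{32\pi}{r_0}\,m_{BY}(\Si_r)+O(r^{-2\tau}),
\]
using $\int_{\Si_r}(H_0-H)\,d\sigma=8\pi\,m_{BY}(\Si_r)$ and $\int_{\Si_r}|H_0-H|\,d\sigma=O(r^{1-\tau})$. Adding the Euclidean identity from the first paragraph gives
\[
16\pi-\int_{\Si_r}H^2\,d\sigma=\frac{32\pi}{r_0}\,m_{BY}(\Si_r)+o(r^{-1}).
\]

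Finally I multiply by $\Area(\Si_r)^{1/2}(16\pi)^{-3/2}$. The Gauss--Bonnet computation in the proof of Lemma \ref{decay2ndforms} together with Lemma \ref{estimate of area} yields $\Area(\Si_r)=4\pi r_0^2(1+o(1))$, so $\Area(\Si_r)^{1/2}=2\sqrt{\pi}\,r_0(1+o(1))$ and
\[
\frac{\Area(\Si_r)^{1/2}}{(16\pi)^{3/2}}\cdot\frac{32\pi}{r_0}=\frac{2\sqrt{\pi}\,r_0(1+o(1))\cdot 32\pi}{64\pi^{3/2}\,r_0}=1+o(1),
\]
while $\Area(\Si_r)^{1/2}(16\pi)^{-3/2}\cdot o(r^{-1})=O(r)\cdot o(r^{-1})=o(1)$. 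Hence $m_H(\Si_r)=(1+o(1))\,m_{BY}(\Si_r)+o(1)$, and since $m_{BY}(\Si_r)\to m_{ADM}(M)$ by Theorem \ref{BYmass} the right side tends to $m_{ADM}(M)$. The main obstacle is the Euclidean step: one must upgrade the scalar bound (\ref{meancurvexp}) on $H_0$ to a bound on the full trace free form $\overset{\circ}{A_0}$, which is exactly where the $C^{2,\alpha}$ control of the abstract isometric embedding is needed; the hypothesis $\tau>\tfrac12$ is used throughout to convert the quadratic errors $O(r^{-2\tau})$ into $o(r^{-1})$, which survives multiplication by the $O(r)$ area factor.
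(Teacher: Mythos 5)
Your argument is correct, but it takes a genuinely different route from the paper's. The paper proves the Hawking limit directly, without ever invoking the isometric embedding $X_r$, the mean curvature $H_0$, or Theorem \ref{BYmass}: it compares $H$ with $\hat H$, the mean curvature of the \emph{same} surface $\Si_r$ measured in the flat background metric $\hat g$ of the end, and uses Lemma \ref{difference of mean curvature} together with the integration-by-parts identity (\ref{3rdterminmeancurv}) to get
$$
\int_{\Si_r}H^2\,d\sigma=\int_{\Si_r}\hat H^2\,d\sigma_0+\hat H\int_{\Si_r}(g_{jj,i}-g_{ij,j})\frac{\p\rho}{\p x^i}\,d\sigma^0+O(r^{-2\tau}),
$$
where the first term is handled by the same Gauss--Bonnet identity you use but applied to $(\Si_r,\hat g)$, namely $16\pi-\int_{\Si_r}\hat H^2\,d\sigma_0=-2\int_{\Si_r}|\overset{\circ}{\hat A}|^2\,d\sigma^0=O(r^{-2\tau})$ via Corollary \ref{decay2ndformsinR3}, and the second, after multiplying by $\Area(\Si_r)^{1/2}(16\pi)^{-3/2}$ and inserting $\hat H=2/r_0+O(r^{-1-\tau})$ (Proposition \ref{nearround}) and $\Area(\Si_r)=4\pi r_0^2+O(r^{2-\tau})$, produces exactly the ADM integral $\frac{1}{16\pi}\int_{\Si_r}(g_{ij,j}-g_{jj,i})\frac{\p\rho}{\p x^i}\,d\sigma^0$. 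You instead pivot everything through the abstract embedding: the Euclidean identity $16\pi-\int H_0^2\,d\sigma=-2\int|\overset{\circ}{A_0}|^2\,d\sigma$, the pointwise expansions of $H$ and $H_0$, and $\int_{\Si_r}(H_0-H)\,d\sigma=8\pi m_{BY}(\Si_r)$, yielding $m_H(\Si_r)=(1+o(1))\,m_{BY}(\Si_r)+o(1)$ and then citing Theorem \ref{BYmass}; there is no circularity, since the paper's proof of Theorem \ref{BYmass} makes no use of the Hawking mass. Your reduction is shorter once Theorem \ref{BYmass} is in hand, and it yields the pleasant quantitative by-product that the Hawking and Brown--York masses of nearly round surfaces agree asymptotically. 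The price is twofold: it makes the Hawking result depend on the Weyl-embedding (Nirenberg) machinery of Section 3 (Theorems \ref{estimateof isometric emb} and \ref{estimate H0}), which the paper's proof of this theorem avoids, needing only Proposition \ref{nearround} and the Section 2 estimates; and it requires the bound $|\overset{\circ}{A_0}|\le Cr^{-1-\tau}$ on the full trace-free second fundamental form of the embedded image, which is \emph{not} stated in Theorem \ref{estimate H0} -- though, as you correctly flag, it does follow from the $C^{2,\alpha}$ estimate of Theorem \ref{estimateof isometric emb} applied to the rescaled surface (the same argument that yields (\ref{meancurvexp})), because the second fundamental form is controlled in $C^{0,\alpha}$ by the $C^{2,\alpha}$ norm of the embedding and its trace-free part vanishes for the round sphere. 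Your remaining bookkeeping -- $r_1^{-1}-r_0^{-1}=O(r^{-1-\tau})$ from matching the two Gauss-curvature expansions, $O(r^{-2\tau})=o(r^{-1})$ from $\tau>\tfrac12$, and $\Area(\Si_r)^{1/2}=2\sqrt{\pi}\,r_0(1+o(1))$ so that the prefactor times $32\pi/r_0$ tends to $1$ -- is sound.
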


\begin{proof}[Proof of Theorem \ref{BYmass}]
Let $H_0$ be the mean curvature of the  isometric embedding image of
$(\Si_r, g)$ in $\R^3$. Then by Theorem \ref{estimate H0}, we have
$$
H_0 =\frac{2}{r_0}+O(r^{-1-\tau}), \quad
X_r \cdot n_0 =r_0 +O(r^{1-\tau}), \quad
K=\frac{1}{r^2_0}+O(r^{-2-\tau}).
$$
Then, by the same arguments as that at page 11 in \cite{FST}, we claim
that
$$
\int_{\Si_r}H_0 d \sigma = 4\pi r_0
+\frac{\Area(\Si_r)}{r_0}+O(r^{1-2\tau}).
$$
In fact, let $K=\frac{1}{r^2 _0}+\bar{K}$, by one of the Minkowski
integral
 formulae  \cite[Lemma 6.2.9]{KLBG},   we have
\begin{equation}\label{BY-ADMH01}
\begin{split}
\int_{\Si_r}H_0d\sigma &=2\int_{\Si_r}KX_r\cdot n_0d\sigma\\
&=2\int_{\Si_r}\lf(\frac1{r_0 ^2}+\bar{K}\ri)X_r\cdot n_0d\sigma\\
&=\frac{2}{r^2 _0}\int_{\Si_r}X_r\cdot n_0d\sigma
+2\int_{\Si_r}\bar{K}
X_r\cdot n_0d\sigma\\
&=\frac{6V(r)}{r_0 ^2}+2\int_{\Si_r}\bar{K}\lf(r_0
+O\lf(r^{1-\tau}\ri)\ri)
d\sigma\\
&=\frac{6V(r)}{r_0 ^2}+2r_0\int_{\Si_r}\bar{K}+O(r^{1-2\tau})\\
&=\frac{6V(r)}{r_0 ^2}+2r_0\int_{\Si_r}\lf(K-\frac1{r_0 ^2}\ri)+O(r^{1-2\tau})\\
&=\frac{6V(r)}{r_0 ^2}+8\pi r_0
-\frac{2\Area(\Si_r)}{r_0}+O(r^{1-2\tau}),
\end{split}
\end{equation}
where $V(r)$ is the volume of the interior of the surface
$X_r(\Sigma_r)$ in $\R^3$. Here and in the sequel, $\Area(\Si_r)$ is
the area of $(\Si, g)$. On the other hand, by the above estimate we
know that  $H_0=\frac2r+H_1$ with $H_1=O\lf(r^{-1-\tau}\ri)$ and by
another Minkowski integral formula we have
\begin{equation}\label{BY-ADMH02}
\begin{split}
2\Area(\Si_r)&= \int_{\Si_r}H_0X_r\cdot n_0d\sigma\\
&=\frac{6V(r)}{r_0}+\int_{\Si_r}H_1X_r\cdot n_0d\sigma\\
&=\frac{6V(r)}{r_0}+r\int_{\Si_r}H_1d\sigma+O\lf(r^{2-2\tau}\ri)\\
&=\frac{6V(r)}{r_0}-2\Area(\Si_r)+r_0 \int_{\Si_r}H_0d\sigma
+O\lf(r^{2-2\tau}\ri),
\end{split}
\end{equation}
which implies
\begin{equation}\label{BY-ADMH03}
\int_{\Si_r}H_0d\sigma=-\frac{6V(r)}{r_0 ^2}+
\frac{4\Area(\Si_r)}{r_0} +O(r^{1-2\tau}).
\end{equation}
 The claim  follows from \eqref{BY-ADMH01} and \eqref{BY-ADMH03}.
 From the claim and (\ref{integral of diff meancurv}) we get
\begin{equation}
\begin{split}
&\int_{\Si_r}(H_0-H) d \sigma=4\pi r_0
+\frac{\Area(\Si_r)}{r_0}-\int_{\Si_r}\hat H d\sigma\\
&+\frac12 \int_{\Si_r}(g_{ij,i}-g_{ii,j})\frac{\p \rho}{\p
x^j}d\sigma_0 + \frac12 \int_{\Si_r}\sigma_{st} \frac{\p^2 \rho}{\p
x^s \p x^t}d\sigma^0 +O(r^{1-2\tau}).
\end{split}
\end{equation}

By the definition, we see that
$$
\frac{\Area(\Si_r)}{r_0}=\frac{\Area_0(\Si_r)}{r_0}+\frac12 r^{-1}_0
\int_{\Si_r}h^{st}\sigma_{st} d\sigma_0 +O(r^{1-2\tau}),
$$
where $\Area_0(\Si_r)$ is the area of $\Si_r$ with respect to induce
metric from $\hat g$.
We also have
$$
\int_{\Si_r}\hat H d\sigma =\int_{\Si_r}\hat H d\sigma_0 +\frac12
\int_{\Si_r}\hat H h^{st}\sigma_{st} d\sigma_0 +O(r^{1-2\tau}).
$$
By Lemma \ref{2ndfundamentalforms} we have
$$
\frac12 \int_{\Si_r}\sigma_{st}\frac{\p^2 \rho}{\p x^s \p
x^t}d\sigma=\frac12\int_{\Si_r}\sigma_{st}\overset{\circ}{\hat
A}_{st} +\frac14 \int_{\Si_r} \hat H h^{st}\sigma_{st}d\sigma_0
+O(r^{1-2\tau})
$$
Combining these things together we get

\begin{equation}
\begin{split}
&\int_{\Si_r}(H_0 -H)d \sigma =4\pi r_0
+\frac{\Area_0(\Si_r)}{r_0}-\int_{\Si_r}\hat H d\sigma_0
+\int_{\Si_r}(\frac{1}{2r_0}-\frac{\hat
H}{4})h^{st}\sigma_{st}d\sigma_0\\
&+ \frac12 \int_{\Si_r}(g_{ij,i}-g_{ii,j})\frac{\p \rho}{\p
x^j}d\sigma_0 +\frac12 \int_{\Si_r}\sigma_{st}B_{st}d\sigma.
\end{split}
\end{equation}

Set $\hat X =y-\overrightarrow{a} $. It is an isometric
embedding of $(\Si_r, \hat g)$ into $\R ^3$. By Proposition
\ref{nearround}, we see that
$$
\hat X\cdot \hat n =r_0 +O(r^{1-\tau}), \quad
\hat H =\frac{2}{r_0}+O(r^{-1-\tau}),\quad
\hat K=\frac{1}{r^2_0}+O(r^{-2-\tau}),
$$
where $\hat K$ is the Gauss curvature of $\Si_r$ in $(M \setminus K,
\hat g)$. Using the same arguments as that at page 11 in \cite{FST} to
$(\Si, \hat g)$ we get
$$
4\pi r_0 +\frac{\Area_0(\Si_r)}{r_0}-\int_{\Si}\hat H d\sigma_0
=O(r^{1-2\tau}).
$$
By Corollary \ref{decay2ndformsinR3} and Proposition \ref{nearround}
we see that
$$
\int_{\Si_r}(\frac{1}{2r_0}-\frac{\hat
H}{4})h^{st}\sigma_{st}d\sigma_0+\frac12
\int_{\Si_r}\sigma_{st}B_{st}d\sigma =O(r^{1-2\tau}).
$$
Thus we have
$$
\int_{\Si_r}(H_0 -H)d \sigma =\frac12
\int_{\Si_r}(g_{ij,i}-g_{ii,j})\frac{\p \rho}{\p x^j}d\sigma_0
+O(r^{1-2\tau}).
$$
The first term in the right hand side of the above equality is the ADM mass
of $(M,g)$. Thus, we finish to prove  Theorem \ref{BYmass}.\end{proof}

\begin{proof}[Proof of Theorem \ref{Hawkingmass}] By Lemma
\ref{difference of mean curvature} and (\ref{3rdterminmeancurv})
we have
\begin{equation}
\begin{split}
\int_{\Si_r}H^2 d \sigma=&\int_{\Si_r}{\hat H}^2 d \sigma
+\int_{\Si_r}H \cdot \hat H \frac{\p \rho}{\p x^i} \frac{\p \rho}{\p
x^j}\sigma_{ij} d \sigma\\
&+\hat{H}\int_{\Si_r} \sigma_{st,i}\frac{\p \rho}{\p x^i}\frac{\p
\rho}{\p x^s}\frac{\p \rho}{\p x^t}d\sigma^0 -2 \hat
{H}\int_{\Si_r}\sigma_{ij}\frac{\p^2 \rho}{\p x^i \p x^j}d\sigma^0
\\
&-2\hat{H}\int_{\Si_r} g_{ij,i}\frac{\p \rho}{\p x^j}d\sigma^0
+\hat{H}\int_{\Si_r} g_{jj,i}\frac{\p \rho}{\p x^i}d\sigma^0 +
O(r^{-2\tau})\\
=&\int_{\Si_r}{\hat H}^2 d \sigma+\int_{\Si_r}H \cdot \hat H
\frac{\p \rho}{\p x^i} \frac{\p \rho}{\p x^j}\sigma_{ij} d
\sigma^0 \\
&-{\hat H}^2 \int_{\Si_r}\sigma_{st}\frac{\p \rho}{\p x^s} \frac{\p
\rho}{\p x^t}d \sigma^0 +\hat{H} \int_{\Si_r}\sigma_{st,t}\frac{\p
\rho}{\p x^s}d \sigma^0 \\
&+\hat {H}\int_{\Si_r}\sigma_{st}\frac{\p^2 \rho}{\p x^s \p
x^t}d\sigma^0
-2\hat {H}\int_{\Si_r}\sigma_{ij}\frac{\p^2 \rho}{\p x^i \p
x^j}d\sigma^0 \\
&-2\hat{H}\int_{\Si_r} g_{ij,i}\frac{\p \rho}{\p x^i}d\sigma^0
+\hat{H}\int_{\Si_r} g_{jj,i}\frac{\p \rho}{\p x^i}d\sigma^0 +
O(r^{-2\tau})\\
=&\int_{\Si_r}{\hat H}^2 d \sigma +\hat{H}\int_{\Si_r}
(g_{jj,i}-g_{ij,j})\frac{\p \rho}{\p x^i}d\sigma^0 \\
&-\hat {H}\int_{\Si_r}\sigma_{ij}\frac{\p^2 \rho}{\p x^i \p
x^j}d\sigma^0 + O(r^{-2\tau}).
\end{split}
\end{equation}
Note that
\begin{equation}
\begin{split}
d\sigma &=(1+h^{ij}\sigma_{ij}+O(r^{-2\tau}))^{\frac12}d\sigma_0
=d\sigma^0 +\frac12 h^{ij}\sigma_{ij} d\sigma_0 +O(r^{-2\tau}).
\end{split}
\end{equation}
Combining above equalities with Lemma \ref{2ndfundamentalforms}, we
get
$$
\int_{\Si_r}H^2 d \sigma =\int_{\Si_r}{\hat H}^2 d \sigma_0
+\hat{H}\int_{\Si_r} (g_{jj,i}-g_{ij,j})\frac{\p \rho}{\p
x^i}d\sigma^0+ O(r^{-2\tau}).
$$
Hence,
\begin{equation}
\begin{split}
m_H (\Si_r)=&\frac{\Area(\Si_r)^{\frac12}}{{(16 \pi)}^\frac32}(16\pi
-\int_{\Si_r}{\hat H}^2 d
\sigma_0)\\
&-\frac{\Area(\Si_r)^{\frac12}}{{(16 \pi)}^\frac32}\cdot
\hat{H}\int_{\Si_r} (g_{jj,i}-g_{ij,j})\frac{\p \rho}{\p
x^i}d\sigma^0
+O(r^{1-2\tau})\\
=&-2\frac{\Area(\Si_r)^{\frac12}}{(16
\pi)^\frac32}\int_{\Si_r}|\overset{\circ}{\hat A}|^2 d\sigma^0
-\frac{\Area(\Si_r)^{\frac12}}{{(16 \pi)}^\frac32}\cdot
\hat{H}\int_{\Si_r} (g_{jj,i}-g_{ij,j})\frac{\p \rho}{\p
x^i}d\sigma^0 \\
&+O(r^{1-2\tau})\\
=&\frac{\Area(\Si_r)^{\frac12}}{{(16 \pi)}^\frac32}\cdot
\hat{H}\int_{\Si_r} (g_{ij,j}-g_{jj,i})\frac{\p \rho}{\p
x^i}d\sigma^0 +O(r^{1-2\tau}),
\end{split}
\end{equation}
where we have used estimate
$$
|\overset{\circ}{\hat A}|\leq C r^{-1-\tau}
$$
in the last equality. On the other hand, we  have

$$
K=\frac{1}{r^2 _0}+O(r^{-2-\tau})
$$
By the Gauss-Bonnet formula, we get
$$
\Area(\Si_r)=4\pi r^2 _0 + O(r^{2-\tau}),
$$
From  Proposition \ref{nearround} we see that
$$
\hat H =\frac{2}{r_0}+O(r^{-1-\tau}),
$$
Combining these formulas we obtain
$$
m_H (\Si_r)=\frac{1}{16 \pi}\int_{\Si_r} (g_{ij,j}-g_{jj,i})\frac{\p
\rho}{\p x^i}d\sigma^0 +O(r^{1-2\tau}).
$$
Thus, we finish to prove the Theorem.
\end{proof}

\end{document}